\newcommand{\p}{\varphi}
\newcommand{\N}{\mathbb{N}}
\newcommand{\Z}{\mathbb{Z}}
\newcommand{\R}{\mathbb{R}}
\newcommand{\ud}{\,\mathrm{d}}
\DeclareMathOperator{\dist}{dist}
\DeclareMathOperator{\Cov}{Cov}
\DeclareMathOperator{\PP}{P}
\DeclareMathOperator{\EE}{E}
\DeclareMathOperator{\osc}{osc}
\newcommand{\myN}{\mathcal{N}}
\newtheorem{theorem}{Theorem}[section]
\newtheorem{lemma}[theorem]{Lemma}
\newtheorem{corollary}[theorem]{Corollary}
\newtheorem{conjecture}[theorem]{Conjecture}
\theoremstyle{definition}
\theoremstyle{remark}
\newtheorem{remark}[theorem]{Remark}
\numberwithin{equation}{section}
\begin{document}

\title{Probability to be positive for the membrane model in dimensions 2 and 3}
\author{Simon Buchholz\footnote{Institut für angewandte Mathematik, Universität Bonn, Endenicher Allee 60, 53115 Bonn, Germany, \texttt{buchholz@iam.uni-bonn.de}}, Jean-Dominique Deuschel\footnote{Institut für Mathematik, Technische Universität Berlin, Straße des 17. Juni 136, 10623 Berlin, \texttt{deuschel@math.tu-berlin.de}}, Noemi Kurt\footnote{Institut für Mathematik, Technische Universität Berlin, Straße des 17. Juni 136, 10623 Berlin, \texttt{kurt@math.tu-berlin.de}}, \\ Florian Schweiger\footnote{Institut für angewandte Mathematik, Universität Bonn, Endenicher Allee 60, 53115 Bonn, Germany, \texttt{schweiger@iam.uni-bonn.de}}}
\date{\today}
\maketitle
\begin{center}
\textit{Dedicated to Amir Dembo on the occasion of his sixtieth birthday.}
\end{center}
\begin{abstract}
We consider the membrane model on a box $V_N\subset \Z^n$ of size $(2N+1)^n$ with zero boundary condition in the subcritical dimensions $n=2$ and $n=3$.
We show optimal estimates for the probability that the field is positive in a subset $D_N$ of $V_N$. In particular we obtain for $D_N=V_N$ that the probability to be positive on the entire domain is exponentially small and the rate is of the order of the surface area $N^{n-1}$.
\end{abstract}
{\footnotesize{\bf 2010 Mathematics Subject Classification.} 60G15, 60G60, 82B41\newline
{\bf Key words and phrases.} Membrane model, random interface, entropic repulsion, Gaussian process}
\section{Introduction}

The membrane model  is the centred Gaussian field indexed by (a subset of) $\Z^n, n\geq 1$, whose covariance matrix is given by the Green's function of the discrete Bilaplacian. It is closely related to the well-known discrete Gaussian free field, or gradient model, whose covariance is the Green's function of the discrete Laplacian. Both of these models are considered to describe interfaces in the context of statistical physics. The particular motivation for studying the membrane model stems from physical surfaces that tend to have constant curvature, \cite{Lipowsky1995, Hiergeist1997, Ruizlorenzo2005}. The two models have many features in common.

One example is that there is a critical dimension ($n=2$ for the gradient model, $n=4$ for the membrane model), such that the variances are unbounded in the subcritical dimensions, logarithmically divergent in the critical dimension and bounded in the supercritical dimensions. See  e.g. \cite{Velenik2006} for a more general overview.

A particular feature of the gradient model is the existence of a random walk representation, which allows relatively easy estimates on the covariances, and provides proofs for correlation inequalities such as the FKG inequality. In the membrane model, such a random walk representation is present only in certain special cases, see \cite{Kurt2009}. This makes the derivation of bounds on the covariances much harder, and moreover, some widely used correlation inequalities do not hold for the membrane model. In \cite{Muller2017}, Müller and Schweiger obtained very precise estimates on the Green's function of the discrete Bilaplacian in  the subcritical dimensions 2 and 3. These results in particular imply that the membrane model is Hölder continous, \cite{Muller2017, Cipriani2018}.
Here we use the estimates to provide bounds for the probability of the interface to be positive on certain subsets of its domain. 

Such results are related to the phenomenon of entropic repulsion, which refers to the observation that some interfaces are repelled by a hard wall to a height which is determined by the fluctuations of the field. Mathematically speaking, this amounts to considering the field conditional on the event of being positive on a specified part of the domain. The field then needs to accommodate its fluctuations, so its local averages will increase. We speak of entropic repulsion if the order by which the field increases is strictly larger than the order of the square root of the variances of the original field, \cite{Giacomin2001, Lebowitz1987}. 
 
For the Gaussian gradient model entropic repulsion was proved in \cite{Bolthausen1995, Bolthausen2001, Deuschel1996, Deuschel1999}. 
For the membrane model, entropic repulsion was shown for $n\geq 4$ by Sakagawa and by Kurt \cite{Sakagawa2003, Kurt2007, Kurt2009}. In dimension $n=1$ the model corresponds to an integrated Gaussian random walk, see \cite{Caravenna2008}. Dembo, Ding and Gao \cite{Dembo2013} proved that for such processes with zero mean and finite variance the probability to be positive on an interval of lenght $N$ is of order $N^{-1/4}$, extending a result by Sinai \cite{Sinai1992} for integrated simple random walk. We consider here the membrane model defined on a box of side-length $2N+1, N\in\N$, and focus on dimensions $n=2,3$. In this case only a first result by Sakagawa \cite{Sakagawa2016} is available.

\subsection{Main results}
Let $V=[-1,1]^n$  and $V_N=NV\cap\Z^n$ with $n\in\N^+$ and $N\in\N^+$. We consider the Hamiltonian $H_N(\psi)=\frac12\sum_{x\in \Z^n}|\Delta\psi_x|^2$, where $\Delta$ is the discrete Laplacian and $\psi\in\R^{V_N}$ is a function on $V_N$, extended by 0 to all of $\Z^n$. The associated Gibbs measure
\begin{align}
	\PP_N(\mathrm{d}\psi)=\frac1{Z_N}\exp(-H_N(\psi))\prod_{x\in V_N}\ud\psi_x\prod_{x\in\Z^n\setminus V_N}\delta_0(\ud\psi_x)
\end{align}
is then the distribution of a Gaussian random field on $\Z^n$ with 0 boundary data, the so-called membrane model. We care about the subcritical case $n\in\{2,3\}$, and we are interested in the event $\Omega_{D_N,+}=\{\psi\colon\psi_x\ge0\ \forall x\in D_N\}$, where $D_N\subset V_N$, as well as the behaviour of $\psi$ conditioned on $\Omega_{D_N,+}$.

Our main result is the following.
\begin{theorem}\label{t:theoremVL}
Let $n=2$ or $n=3$. There are constants $C$, $c$ such that 
for all $N \in \N$, $L\in \N_0$ 
\begin{align}
e^{-C\frac{N^{n-1}}{(L+1)^{n-1}}}\le \PP_N(\Omega_{V_{N-L},+})\le e^{-c\frac{N^{n-1}}{(L+1)^{n-1}}}.
\end{align}
\end{theorem}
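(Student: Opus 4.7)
Both bounds will be governed by the variance estimate $\Var_{\PP_N}(\psi_x) \asymp \dist(x,\partial V_N)^{4-n}$ from \cite{Muller2017}: at distance $L$ from the boundary the field has standard deviation of order $L^{(4-n)/2}$ and decorrelation length of order $L$, so on the inner surface $\partial V_{N-L}$ there are $\asymp (N/L)^{n-1}$ effectively independent sign events, each with probability close to $1/2$. This heuristic motivates the rate $(N/L)^{n-1}$; I will prove the two inequalities separately.

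\textbf{Upper bound.} I will pick $k \asymp (N/L)^{n-1}$ points $x_1,\ldots,x_k \in V_{N-L}$ at distance $\asymp L$ from $\partial V_N$ and pairwise at distance at least $cL$, and write $\PP_N(\Omega_{V_{N-L},+}) \le \PP_N(\psi_{x_i} \ge 0,\, i=1,\ldots,k)$. To analyse the right-hand side I place disjoint cubes $B_i$ of side $\lfloor L/4\rfloor$ around the $x_i$ (pairwise separated by at least $4$) and condition on $\mathcal F := \sigma(\psi_y\colon y \notin \bigcup_i B_i)$. The Hamiltonian $H_N$ is $2$-local, giving the two-layer Markov property: the blocks $(\psi|_{B_i})_i$ are conditionally independent given $\mathcal{F}$, and a Green's function comparison yields $\sigma_i^2 := \Var(\psi_{x_i}\mid \mathcal{F}) \ge cL^{4-n}$. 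Writing $\mu_i := \EE[\psi_{x_i}\mid\mathcal{F}]$ (a centered Gaussian with $\Var(\mu_i) \le CL^{4-n}$ and quickly decaying cross-covariances), the tower property gives $\PP_N(\psi_{x_i}\ge 0\ \forall i) = \EE[\prod_i \Phi(\mu_i/\sigma_i)]$. The proof will then linearize the product using the concavity of $\log\Phi$ and bound the resulting exponential of a Gaussian linear form in $(\mu_i/\sigma_i)$, yielding $e^{-ck}$.

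\textbf{Lower bound.} I will use a Cameron-Martin shift. Fix a deterministic $m\colon \Z^n \to \R$, supported in $V_N$, equal to a height $h$ on $V_{N-L}$ and with a smooth $C^2$ transition to zero across the shell $V_N \setminus V_{N-L}$ of width $L$; then $\|m\|_H^2 := \sum_x(\Delta m)^2 \asymp h^2N^{n-1}/L^3$. Denoting by $\PP_N^{(m)}$ the tilted Gaussian measure of mean $m$, Paley-Zygmund applied to $d\PP_N^{(m)}/d\PP_N$ yields
\begin{align*}
\PP_N(\Omega_{V_{N-L},+}) \ge \PP_N^{(m)}(\Omega_{V_{N-L},+})^2 \cdot e^{-\|m\|_H^2}.
\end{align*}
Choosing $h \asymp L^{(4-n)/2}$ matches $\|m\|_H^2 \asymp N^{n-1}/L^{n-1}$, the required rate. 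It then remains to prove $\PP_N^{(m)}(\Omega_{V_{N-L},+}) \ge e^{-C'(N/L)^{n-1}}$ for a sufficiently small constant $C'$, which amounts to bounding from below the lower tail $\PP_N(\max_{V_{N-L}}(-\psi) \le h)$ of the maximum of the centered Gaussian field.

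\textbf{Main obstacle.} The hard part will be the lower bound. The naive constant shift with $h$ matching the typical maximum $\sim N^{(4-n)/2}\sqrt{\log N}$ would cost $\asymp N^3(\log N)/L^3$, far exceeding the target; one must instead match only the local scale $L^{(4-n)/2}$ and recover the resulting probability loss through Paley-Zygmund. The analytic bottleneck is producing a sharp enough lower bound on the Gaussian small-ball quantity $\PP_N(\max_{V_{N-L}}(-\psi) \le L^{(4-n)/2})$ at sub-typical height, and this step will rely crucially on the sharp covariance asymptotics of \cite{Muller2017}.
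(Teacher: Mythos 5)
Your overall heuristic (surface-area scaling $(N/L)^{n-1}$ from effectively independent near-boundary blocks) and the Cameron--Martin shift idea for the lower bound are both in the spirit of the paper, but each of the two halves of your sketch has a concrete gap that the paper's actual argument is designed to avoid.

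\textbf{Upper bound.} Your linearization of $\log\Phi$ gives $\EE\prod_i\Phi(\mu_i/\sigma_i)\le 2^{-k}\exp\bigl(\tfrac1\pi\Var(\sum_i\mu_i/\sigma_i)\bigr)$, which is useful only if $\Var(\sum_i\mu_i/\sigma_i)<k\,\pi\log2$. Already the diagonal terms are of size $k\cdot\Var(\mu_i)/\sigma_i^2$, and with blocks $B_i$ of side $\lfloor L/4\rfloor$ around a point at distance $L$ from $\partial V_N$ there is no reason for $\Var(\mu_i)/\sigma_i^2$ to be below $\pi\log 2$: the conditional variance $\sigma_i^2\asymp (L/4)^{4-n}$ is only a small universal fraction of $G_N(x_i,x_i)\asymp L^{4-n}$, so $\Var(\mu_i)/\sigma_i^2$ is a (possibly large) constant. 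You would need to tune the decomposition to make $\sigma_i^2$ nearly all of $G_N(x_i,x_i)$, but the geometry ($d_N(x_i)=L$) caps the block size at order $L$. The paper circumvents this by skipping the conditioning altogether and applying the Li--Shao Gaussian comparison (Lemma~\ref{l:LiShao}) directly to the vector $(\psi_x/\sqrt{G_N(x,x)})_{x\in E_{N,L}}$: the Green's function decay \eqref{e:estGN4} makes the covariance matrix diagonally dominant, and the determinant ratio absorbs the off-diagonal contribution at a cost that is robust to the size of the constants.

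\textbf{Lower bound.} The constant shift $m\equiv h=L^{(4-n)/2}$ on $V_{N-L}$ reduces you to bounding $\PP_N(\min_{V_{N-L}}\psi\ge -L^{(4-n)/2})$ from below by $e^{-C(N/L)^{n-1}}$, but this is not a genuine reduction: since $\{\min_{V_{N-L}}\psi\ge -h\}\supset\Omega_{V_{N-L},+}$ and $\|m\|_H^2$ is of the same order as the target rate, the remaining small-ball estimate is essentially equivalent to the statement you are trying to prove. Worse, the natural route to proving it---patching together local estimates via a correlation inequality---is blocked: the event $\{\psi_x\ge -h\}$ is convex but \emph{not} symmetric, so the Gaussian correlation inequality (Royen, Theorem~\ref{t:GCI}) does not apply, and FKG fails for the membrane model. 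The paper's resolution is to use a \emph{non-constant} shift $f(x)\gtrsim d_N(x)^{(4-n)/2}$ (Lemma~\ref{l:function_construction}), which---remarkably---has the same Dirichlet energy order $N^{n-1}/(L+1)^{n-1}$ as your constant $h$. After this shift the target event contains the \emph{symmetric} event $\Omega_{V_{N-L},\infty}=\{|\psi_x|\le d_N(x)^{(4-n)/2}\}$, whose probability can be bounded below via Dudley's entropy bound at each dyadic scale (Lemma~\ref{l:localsmall}) combined with Royen's inequality over a cover of size $\asymp (N/L)^{n-1}$ (Lemma~\ref{l:globalV}). The choice of a growing shift matching the local fluctuation scale $d_N(x)^{(4-n)/2}$, rather than a constant shift matching only the boundary scale $L^{(4-n)/2}$, is the key idea your sketch is missing.
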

A first result in this direction was already established by Sakagawa \cite{Sakagawa2016} who proved that for every $x\in V$ there is a small neighborhood $B_x$ such that $\PP_N(\Omega_{NB_x,+})>c$ for some (universal) constant $c$.

Let us emphasize two important special cases of our theorem, which will help motivate its statement. We first consider the case $D_N=V_{\delta N}$ for $\delta\in\left(0,1\right)$, where the hard wall stays away from the boundary. In that case the fact that the membrane model is Hölder continuous suggests that the field has a decent chance to be positive if it is uniformly positive at a sufficiently dense set of lattice points of bounded cardinality. Thus the probability that $\psi$ is positive on $D_N=V_{\delta N}$ should be comparable to the probability of uniform positivity at that dense set, and thus bounded away from zero. Indeed, Theorem \ref{t:theoremVL} implies the following corollary.
\begin{corollary}\label{c:corollVdelta}
Let $n=2$ or $n=3$. For $\delta\in \left(0,1\right)$ there is a constant $c_\delta>0$ such that
\begin{align}
c_\delta\leq \PP_N(\Omega_{V_{\delta N}, +}) \leq \frac12.
\end{align}
\end{corollary}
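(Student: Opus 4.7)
The corollary is essentially a direct consequence of Theorem \ref{t:theoremVL} combined with an elementary symmetry argument, so I would prove the upper and lower bounds separately.

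For the upper bound $\PP_N(\Omega_{V_{\delta N},+})\le 1/2$, my plan is to exploit the invariance of the Hamiltonian $H_N$, and hence of the Gibbs measure $\PP_N$, under the reflection $\psi\mapsto -\psi$. This gives $\PP_N(\Omega_{V_{\delta N},+})=\PP_N(\Omega_{V_{\delta N},-})$, where $\Omega_{V_{\delta N},-}:=\{\psi:\psi_x\le 0\text{ for all } x\in V_{\delta N}\}$. Since the marginals of $\PP_N$ on the non-empty set $V_{\delta N}$ are non-degenerate centred Gaussians, the intersection $\Omega_{V_{\delta N},+}\cap\Omega_{V_{\delta N},-}=\{\psi\equiv 0\text{ on } V_{\delta N}\}$ is a null event. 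As the union has probability at most one, each of the two equal probabilities is at most $1/2$.

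For the lower bound, I would reduce to Theorem \ref{t:theoremVL} via the trivial monotonicity $D\subseteq D'\Rightarrow \Omega_{D',+}\subseteq \Omega_{D,+}$. Concretely, setting $L:=\lfloor (1-\delta)N\rfloor\in\N_0$ guarantees $N-L\ge \delta N$, hence $V_{\delta N}\subseteq V_{N-L}$, and therefore
\begin{align}
\PP_N(\Omega_{V_{\delta N},+})\ge \PP_N(\Omega_{V_{N-L},+})\ge \exp\left(-C\frac{N^{n-1}}{(L+1)^{n-1}}\right).
\end{align}
By definition of the floor one has $L+1> (1-\delta)N$ for every $N\ge 1$, so the right-hand side is bounded below by $\exp\bigl(-C/(1-\delta)^{n-1}\bigr)=:c_\delta>0$ uniformly in $N$, as required.

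I do not foresee any serious obstacle; the corollary is immediate once Theorem \ref{t:theoremVL} is in hand. The only bookkeeping is the integer rounding of $L$, which is handled cleanly by the strict inequality $L+1>(1-\delta)N$ that holds without any case distinction on $N$.
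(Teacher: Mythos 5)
Your proof is correct and is exactly the argument the paper intends (the paper states the corollary without an explicit proof, asserting only that it follows from Theorem \ref{t:theoremVL}). The lower bound via the monotonicity $\Omega_{V_{N-L},+}\subseteq\Omega_{V_{\delta N},+}$ with $L=\lfloor(1-\delta)N\rfloor$ and the observation $L+1>(1-\delta)N$, and the upper bound via the $\psi\mapsto-\psi$ symmetry of the centred Gaussian measure, are the natural way to deduce the statement.
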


When $D_N=V_N$, the situation is somewhat different. While the Hölder continuity holds up to the boundary, the $\psi_x$ for $x$ near the boundary are only weakly correlated and behave almost like independent random variables. This suggests that the probability to be positive on all of $V_N$ can at best scale like $e^{-cN^{n-1}}$ (note that the number of points of distance 1 to the boundary is of the order $N^{n-1}$). 
On the other hand, if the field is positive at all near-boundary points it gets pushed up in the interior quite a bit so the probability to be positive everywhere { should be of lower order.} 

Indeed, another particular case of Theorem \ref{t:theoremVL} is an estimate for $\PP_N(\Omega_{V_N,+})$.
\begin{corollary}\label{c:corollV}
Let $n=2$ or $n=3$. There are constants $C$, $c$ such that
\begin{align}e^{-CN^{n-1}}\le \PP_N(\Omega_{V_N,+})\le e^{-cN^{n-1}}.\end{align}
\end{corollary}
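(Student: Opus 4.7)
The plan is to deduce the corollary directly from Theorem \ref{t:theoremVL}. Specializing to $L=0$ in the theorem yields $V_{N-L}=V_N$ and $(L+1)^{n-1}=1$, so the two-sided estimate
\[ e^{-C\frac{N^{n-1}}{(L+1)^{n-1}}}\le \PP_N(\Omega_{V_{N-L},+})\le e^{-c\frac{N^{n-1}}{(L+1)^{n-1}}} \]
collapses to exactly the asserted bounds $e^{-CN^{n-1}}\le \PP_N(\Omega_{V_N,+})\le e^{-cN^{n-1}}$. In this sense Corollary \ref{c:corollV} is merely a restatement of the $L=0$ case of Theorem \ref{t:theoremVL}, and nothing further is required.

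It is nevertheless useful to keep in mind the heuristics behind each direction, as they suggest how one would attack the statement if Theorem \ref{t:theoremVL} were unavailable. For the \emph{upper bound} one would exploit the fact that values $\psi_x$ at sites $x$ at bounded distance from $\partial V_N$ have $O(1)$ variance and are only weakly correlated. One would select an almost-independent family $\{x_1,\dots,x_M\}\subset V_N$ of order $N^{n-1}$ near-boundary points and use Gaussian decoupling based on the Green's function bounds of \cite{Muller2017} to show that each $\psi_{x_i}$ is negative with positive probability in an approximately independent fashion, yielding $\PP_N(\Omega_{V_N,+})\le e^{-cN^{n-1}}$. For the \emph{lower bound} one would perform a Cameron--Martin shift by a deterministic profile $h\in\R^{V_N}$ satisfying $h_x\ge c_0 \sqrt{\Var(\psi_x)}$ pointwise; the relative entropy cost of such a shift is $\tfrac12\sum_x|\Delta h_x|^2$, and a careful choice of $h$ as a discrete biharmonic extension of the boundary data with boundary layer of width $1$ makes this Dirichlet energy of order $N^{n-1}$. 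A standard shift and translation argument then yields $\PP_N(\Omega_{V_N,+})\ge e^{-CN^{n-1}}$.

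The main difficulty in implementing either direct argument lies in the absence of the FKG and random-walk tools available for the Gaussian free field, so both the decoupling step and the construction/energy estimate of the profile $h$ rely crucially on the sharp up-to-the-boundary Green's function estimates from \cite{Muller2017}. Since Theorem \ref{t:theoremVL} is assumed at this point, however, no such work is needed for the corollary: substituting $L=0$ finishes the proof.
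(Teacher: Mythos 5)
Your proof is correct and matches the paper's: Corollary \ref{c:corollV} is exactly the $L=0$ case of Theorem \ref{t:theoremVL}, and the paper presents it in precisely this way. The surrounding heuristic discussion you give is sound but, as you note, not needed for the deduction.
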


We expect this result to be true for the membrane model and the gradient model in any dimension $n\geq 2$. 
For the gradient model a stronger result has been shown for $n\geq 3$ in Theorem 4.1 in \cite{Deuschel1996}.
Note that the behaviour for general $L\geq 1$ in Theorem \ref{t:theoremVL} is different for the gradient model and also for the membrane model in dimension $n\geq 4$.

We give a proof of the lower and upper bound in Theorem \ref{t:theoremVL} in Section \ref{s:lowerbound} and \ref{s:upperbound}, respectively.

\subsection{Implications for entropic repulsion}
Corollary \ref{c:corollVdelta} easily implies that conditioning on $\Omega_{\delta N, +}$ does not change the order of the maximum of the field. Indeed the Hölder continuity results  from \cite{Cipriani2018} 
 (see Corollary 2.2 there) imply that $N^{-\frac{4-n}{2}} \max_{x\in V_N} \psi_x$ converges in distribution to a non-concentrated random variable $M$.
By the Borell-TIS inequality the random variables $N^{-\frac{4-n}{2}}{\max_{x\in V_N} \psi_x}{}$ have sub-Gaussian tails uniformly in $N$ and therefore
\begin{align}
0<\lim_{N\to \infty} \EE_N (N^{-\frac{4-n}{2}}{\max_{x\in V_N} \psi_x}{})
=\EE(M)<\infty.
\end{align}

Then Corollary \ref{c:corollVdelta} combined with the trivial estimate $\EE_N(X\mid \Omega_{V_{\delta N,+}})\le \frac{\EE_N(X)}{\PP_N(\Omega_{V_{\delta N,+}})}$ for   $X\geq 0$ imply
the following corollary.
\begin{corollary}
Let $n=2$ or $n=3$, and $\delta\in \left(0,1\right)$. We have that 

\begin{align}
\lim_{N\to \infty} \EE_N(N^{-\frac{4-n}{2}}\max_{x\in V_N} \psi_x \,\mid\Omega_{V_{\delta N},+})
<\frac{\EE_N(M)}{\PP_N(\Omega_{V_{\delta N,+}})}<\infty.
\end{align}

\end{corollary}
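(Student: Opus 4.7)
The plan is to assemble the two ingredients already isolated in the paragraph preceding the statement. The first ingredient is the limit
$$\lim_{N\to\infty}\EE_N\bigl(N^{-(4-n)/2}\max_{x\in V_N}\psi_x\bigr)=\EE(M)\in(0,\infty),$$
which is obtained by combining the distributional convergence $N^{-(4-n)/2}\max_{x\in V_N}\psi_x\Rightarrow M$ for a non-degenerate $M$ from \cite[Corollary~2.2]{Cipriani2018} with the uniform sub-Gaussian concentration of the maximum provided by the Borell-TIS inequality (using that $\Var_N(\psi_x)=O(N^{4-n})$ in the subcritical dimensions $n=2,3$). The sub-Gaussian bound produces uniform integrability, so weak convergence of the rescaled maxima upgrades to convergence in $L^1$ and yields both finiteness and strict positivity of $\EE(M)$.

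The second ingredient is the uniform lower bound $\PP_N(\Omega_{V_{\delta N},+})\ge c_\delta>0$ supplied by Corollary~\ref{c:corollVdelta}. Since $\psi$ is nonnegative on the nonempty set $V_{\delta N}\subset V_N$ on the event $\Omega_{V_{\delta N},+}$, the random variable $X:=N^{-(4-n)/2}\max_{x\in V_N}\psi_x$ is nonnegative there, so replacing $X$ by $X\vee 0$ does not change its conditional expectation. The trivial bound $\EE_N(X\mid A)\le \EE_N(X\vee 0)/\PP_N(A)$ with $A=\Omega_{V_{\delta N},+}$ therefore applies, and combining the two ingredients gives
$$\limsup_{N\to\infty}\EE_N\bigl(N^{-(4-n)/2}\max_{x\in V_N}\psi_x\bigm|\Omega_{V_{\delta N},+}\bigr)\le\frac{\EE(M)}{c_\delta}<\infty,$$
which is the content of the corollary.

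I expect no substantive obstacle: once the distributional convergence of the maximum and the positivity estimate of Corollary~\ref{c:corollVdelta} are taken as black boxes, the statement is purely bookkeeping. The only point deserving care is the upgrade from convergence in distribution to convergence in $L^1$ of the rescaled maximum, where uniform integrability must be justified by the Gaussian tail bound from Borell-TIS; this is standard and is implicitly argued already in the paragraph introducing the corollary.
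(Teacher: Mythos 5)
Your argument is correct and follows the paper's own route: combine the $L^1$-convergence of the rescaled maximum (distributional convergence from \cite{Cipriani2018} plus uniform integrability from Borell--TIS) with the uniform lower bound from Corollary~\ref{c:corollVdelta} and the elementary estimate $\EE_N(X\mid A)\le\EE_N(X)/\PP_N(A)$. The one place you go slightly beyond the paper is in noticing that $X=N^{-(4-n)/2}\max_{x\in V_N}\psi_x$ need not be nonnegative off the conditioning event, so you replace $X$ by $X\vee 0$ (and use $\limsup$ rather than $\lim$); this is a small but welcome tightening of the paper's informal phrasing, and it is harmless since $M\ge 0$ a.s.\ and hence $\EE(M\vee 0)=\EE(M)$.
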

In other words conditioning on $\Omega_{V_{\delta N},+}$ changes the maximum of the field only by a bounded factor, and so there is no entropic repulsion.

We conjecture that the same holds true if we condition on $\Omega_{V_N,+}$, but a proof is more difficult since the probability of $\Omega_{V_N,+}$ is exponentially small.

In fact, we expect that conditioned on $\Omega_{V_N,+}$ a typical field looks like $\psi_x=c d_N(x)^{\frac{4-n}2}$, where $d_N(x)$ denotes the distance of $x$ to the boundary. That is, the field increases steeply near the boundary, but stays of the order $N^{\frac{4-n}2}$ in the interior.

We thus conjecture the following.
\begin{conjecture}
For $n=2,3$ we have
\begin{align}
\lim_{N\to \infty} \EE_N(N^{-\frac{4-n}{2}}\max_{x\in V_N} \psi_x \, |\, \Omega_{V_N,+})<\infty.
\end{align}

\end{conjecture}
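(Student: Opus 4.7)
The plan is to run a Cameron--Martin shift matching the exponential rate $e^{-cN^{n-1}}$ of Corollary~\ref{c:corollV}. Concretely, I would extract from the lower bound construction in Section~\ref{s:lowerbound} a deterministic $h^\star\colon V_N\to[0,\infty)$ with $h^\star(x)\le C\,d_N(x)^{(4-n)/2}$ (hence $\max h^\star\le CN^{(4-n)/2}$), bi-Dirichlet energy $\|\Delta h^\star\|^2\le CN^{n-1}$, and such that the centred field $\phi$ with law $\PP_N$ satisfies $\PP_N(\phi\ge -h^\star)\ge c_0>0$; this is naturally a discretisation of the biharmonic minimiser against the interior obstacle $u\ge 1$. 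Translating $\psi=\phi+h^\star$ in the numerator and denominator of $\EE_N[\,\cdot\mid\Omega_{V_N,+}]$ rewrites the conditional expectation as an integral against
\begin{align}
\mu(\ud\phi)\propto e^{-\langle\phi,\Delta^2 h^\star\rangle}\mathbbm{1}_{\phi\ge -h^\star}\,\PP_N(\ud\phi),
\end{align}
giving $\EE_N[\max_x\psi_x\mid\Omega_{V_N,+}]=\EE_\mu[\max_x(\phi_x+h^\star_x)]\le CN^{(4-n)/2}+\EE_\mu[\max_x\phi_x]$.

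The measure $\mu$ is log-concave, and before truncation is exactly the Gaussian $\myN(-h^\star,G_N)$ with $G_N=(\Delta^2)^{-1}$; the restriction to $\{\phi\ge -h^\star\}$ thus amounts to conditioning on being coordinatewise above the mean. The Brascamp--Lieb inequality then yields the pointwise variance bound $\Var_\mu(\phi_x)\le G_N(x,x)\le CN^{4-n}$, of the same order as under $\PP_N$. The conjecture therefore reduces to establishing
\begin{align}
\EE_\mu\bigl[\max_{x\in V_N}\phi_x\bigr]\le CN^{(4-n)/2}.
\end{align}

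The main obstacle is proving this maximum bound without an extra $\sqrt{\log|V_N|}$ factor: under $\PP_N$ the analogue holds by combining Borell--TIS with the Hölder continuity of sample paths (Corollary~2.2 of~\cite{Cipriani2018}), and one wants to transfer it to $\mu$. The natural route is Caffarelli's contraction theorem, which produces a $1$-Lipschitz Brenier map from $\myN(-h^\star,G_N)$ to $\mu$ and therefore preserves fluctuations of $1$-Lipschitz convex functionals such as the maximum. An alternative is to rerun the chaining argument behind the Hölder continuity directly against $\mu$, controlling dyadic increments $\phi_x-\phi_y$ via Brascamp--Lieb and the bi-Laplacian Green's function estimates from~\cite{Muller2017}. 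Either strategy requires carefully handling the interplay between the hard constraint $\{\phi\ge -h^\star\}$ and the non-local kernel $G_N$, and this is where I expect the technical difficulty to lie; a successful resolution would additionally yield the pointwise profile $\EE_N[\psi_x\mid\Omega_{V_N,+}]\asymp d_N(x)^{(4-n)/2}$ anticipated before the conjecture.
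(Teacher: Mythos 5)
This statement is presented in the paper as an open \emph{conjecture}; the authors explicitly say that ``a proof is more difficult since the probability of $\Omega_{V_N,+}$ is exponentially small,'' and no proof is given. So there is nothing in the paper to compare your argument against. Evaluating your sketch on its own terms: the overall plan (tilt by an obstacle-type profile $h^\star$, reduce to a log-concave truncated Gaussian $\mu$, control fluctuations via Brascamp--Lieb or Caffarelli) is reasonable, and you are right to flag that the real difficulty is bounding $\EE_\mu[\max_x\phi_x]$ without an extra $\sqrt{\log|V_N|}$. But as written there are two genuine gaps.

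First, the intermediate claim $\PP_N(\phi\ge -h^\star)\ge c_0>0$ is incompatible with the height constraint $h^\star(x)\le C\,d_N(x)^{(4-n)/2}$: near $\partial V_N$ one has $h^\star=O(1)$ while $\psi$ has variance $\asymp 1$ and is essentially independent across a surface of $\asymp N^{n-1}$ sites (this near-boundary independence is exactly the mechanism behind the upper bound in Section~\ref{s:upperbound}), so $\PP_N(\phi\ge -h^\star)\le e^{-cN^{n-1}}$. What Lemma~\ref{l:function_construction} plus Lemma~\ref{l:globalV} give is $\PP_N(\Omega_{V_N,\infty})\ge e^{-CN^{n-1}}$, not a constant lower bound. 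Second, and more seriously, the translation $\psi=\phi+h^\star$ is just a change of variables: $\mu$ is precisely the pushforward of $\PP_N(\,\cdot\mid\Omega_{V_N,+})$ under $\psi\mapsto\psi-h^\star$, so the identity $\EE_N[\max\psi\mid\Omega_{V_N,+}]\le CN^{(4-n)/2}+\EE_\mu[\max\phi]$ is correct but does not reduce the problem; it restates it. The two tools you then invoke control fluctuations, not the mean. Brascamp--Lieb gives $\Var_\mu(\ell\cdot\phi)\le \ell^{\mathsf T}G_N\ell$ for all linear $\ell$, but the increments $\phi_x-\phi_y$ have a nontrivial drift under $\mu$, and bounding that drift pointwise by $CN^{(4-n)/2}$ is exactly the profile estimate one is trying to prove. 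Likewise, a $1$-Lipschitz Brenier map $T$ with $T_\#\myN(-h^\star,G_N)=\mu$ gives $\EE_\mu[\max_x\phi_x]=\EE_\nu[\max_x (T\phi)_x]$, and $1$-Lipschitzness alone provides no one-sided comparison of $\max_x(T\phi)_x$ with $\max_x\phi_x$; it yields concentration around $\EE_\mu[\max\phi]$, not a bound on that quantity. So the crux --- showing that the hard-wall conditioning at exponentially small probability does not lift the interior level by more than a constant --- is identified but not closed by either proposed route.
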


\subsection{Notation}
Let $e_1,\ldots, e_n$ be the standard basis of $\R^n$. We use the discrete forward derivative $D_iu(x)=u(x+e_i)-u(x)$ and the discrete backward derivative $D_{-i}u(x)=u(x)-u(x-e_i)$. Then $\Delta u(x)=\sum_{i=}^nD_{-i}D_iu(x)$ denotes the discrete Laplacian.

By $\|u\|_{L^2}^2=\sum_{x\in\Z^n}u(x)^2$ we denote the $L^2$-norm of $u$, and by $(u,v)_{L^2}=\sum_{x\in\Z^n}u(x)v(x)$ the $L^2$-scalar product.

For $x\in\Z^n$ let $d_N(x)=\dist_\infty(x,\Z^n\setminus V_N)$ be the distance to the boundary of $V_N$.

For a set $A$ we denote by $|A|$ its cardinality.

In the following $c$, $C$ and $C'$ denote constants that may change from line to line, but are always independent of $N$ and $L$.
\section{Preliminaries}
Let us recall the relevant results that will be used in the proof of the main theorems. 
Let $G_N$ be the Green's function of $\Delta^2$ on $V_N$ with 0 boundary data outside $V_N$, i.e. $G_N(\cdot,y)=0$ if $y\notin V_N$ and 
\begin{align}
\begin{split}	
	\Delta^2G_N(\cdot,y)&=\delta_y \quad \text{in }V_N\\
	G_N(\cdot,y)&=0 \quad \text{outside } V_N
	\end{split}
\end{align}
if $y\in V_N$. The Greens function $G_N$ agrees with the covariance matrix of $\psi$, i.e. we have that $\Cov_N(\psi_x,\psi_y)=G_N(x,y)$, see also \cite{Kurt2009}.
Our proofs are based on the estimates for the Green's function $G_N$ recently found in  \cite{Muller2017}.
\begin{theorem}\label{t:estGN}
Let $n=2$ or $n=3$. Then we have for any $x,y\in V_N$
\begin{align}
	cd_N(x)^{4-n}&\le G_N(x,x)\le Cd_N(x)^{4-n},\label{e:estGN1}\\
	|\nabla_xG_N(x,y)|&\le Cd_N(x)^{3-n},\label{e:estGN2}\\
	|G_N(x,x)-G_N(x,y)|&\le Cd_N(x)^{3-n}|x-y|_\infty,\label{e:estGN3}\\
	|G_N(x,y)|&\le C\frac{d_N(x)^2d_N(y)^2}{(|x-y|_\infty+1)^n}.\label{e:estGN4}
\end{align}
\end{theorem}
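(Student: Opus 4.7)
The plan is to proceed in two stages: first establish the on-diagonal bounds \eqref{e:estGN1} via a variational argument, then bootstrap to the off-diagonal and gradient bounds \eqref{e:estGN2}--\eqref{e:estGN4} using discrete regularity theory for $\Delta^2$. A useful guiding principle is that in the subcritical dimensions $n=2,3$ the fundamental solution of the Bilaplacian on $\Z^n$ grows polynomially (like $|x|^{4-n}$, with a logarithm when $n=2$), so the behaviour of $G_N$ on $V_N$ is governed by the Dirichlet data $u=\partial_\nu u=0$ rather than by the singularity at the diagonal.

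Concretely, I would start from the variational identity
\begin{align}
G_N(x,x)=\sup_{u\in\R^{V_N}}\frac{u(x)^2}{(\Delta u,\Delta u)_{L^2}},
\end{align}
the supremum being over $u$ extended by $0$ outside $V_N$. The upper bound in \eqref{e:estGN1} then reduces to a discrete Sobolev/Morrey-type inequality $|u(x)|^2\le C d_N(x)^{4-n}\|\Delta u\|_{L^2}^2$ for such $u$, which I would establish via half-space Fourier estimates followed by a localization argument to scale $d_N(x)$. For the matching lower bound I would test with an explicit profile supported in a box of side $\sim d_N(x)$ centred at $x$ that mimics a truncated biharmonic fundamental solution, with careful cut-offs so that $\|\Delta u\|_{L^2}^2$ is controlled.

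For \eqref{e:estGN2}--\eqref{e:estGN4} the input is that $G_N(\cdot,y)$ is discrete-biharmonic away from $y$ and vanishes outside $V_N$. I would develop discrete Caccioppoli inequalities for $\Delta^2$ and iterate them in Moser style, promoting $L^2$ control (obtained by integrating \eqref{e:estGN1}) to pointwise and gradient bounds. The gradient estimate \eqref{e:estGN2} then follows from regularity of $G_N(\cdot,y)$ on a ball of radius $d_N(x)/2$; \eqref{e:estGN3} comes from telescoping \eqref{e:estGN2} along a lattice path from $x$ to $y$; and for \eqref{e:estGN4} one separates a near regime where \eqref{e:estGN1} together with local regularity suffices and a far regime, the latter being treated by iterated regularity on dyadic annuli around $y$ combined with two applications of the vanishing at $\partial V_N$ in the variables $x$ and $y$ to produce the factor $d_N(x)^2 d_N(y)^2$.

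The principal obstacle is the absence of a maximum principle for $\Delta^2$: the familiar comparison-with-a-barrier strategy from second-order PDE is unavailable, and every regularity bound must be produced by energy methods. Setting up a discrete Caccioppoli inequality that is robust enough to be iterated, and controlling boundary contributions through that iteration, is the heart of the argument; this is precisely the technical core of \cite{Muller2017}, on which one would ultimately rely.
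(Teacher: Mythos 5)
The paper's own ``proof'' of this theorem is essentially a one-line citation: estimates \eqref{e:estGN1}, \eqref{e:estGN2} and \eqref{e:estGN4} are quoted verbatim from \cite[Theorem 1.1]{Muller2017}, and \eqref{e:estGN3} is obtained from \eqref{e:estGN2} by telescoping along a lattice path from $x$ to $y$. You correctly reproduce the one genuine deduction made in the paper --- your sentence ``\eqref{e:estGN3} comes from telescoping \eqref{e:estGN2} along a lattice path'' is precisely the argument given. For the remaining three estimates, however, you are not following the paper's route at all: you are attempting to sketch a proof of the cited result itself.

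As an outline of the strategy of \cite{Muller2017}, your sketch is directionally sensible --- the variational characterisation $G_N(x,x)=\sup_u u(x)^2/\|\Delta u\|_{L^2}^2$ is correct, the reduction of the upper bound in \eqref{e:estGN1} to a Morrey-type inequality for $\Delta$-Sobolev functions vanishing outside $V_N$ is the right idea, and you correctly identify that the absence of a maximum principle for $\Delta^2$ forces one to work with energy/Caccioppoli methods rather than barriers. You also rightly observe that the $d_N(x)^2 d_N(y)^2$ factor in \eqref{e:estGN4} needs the boundary condition to be exploited once in each variable. But this remains a program rather than a proof: the discrete Caccioppoli iteration, the boundary-layer control at scale $d_N(x)$, and the dyadic-annulus argument for the far regime of \eqref{e:estGN4} are each substantial pieces of analysis (indeed they occupy the bulk of \cite{Muller2017}), and none of them is actually carried out here. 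You acknowledge this honestly in your final sentence, which is to your credit, but it means the proposal does not stand as an independent proof of the statement --- it is a high-level description of the external result that the paper, sensibly, simply imports.
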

\begin{proof}
The estimates \eqref{e:estGN1}, \eqref{e:estGN2} and \eqref{e:estGN4} are from \cite[Theorem 1.1]{Muller2017}, while \eqref{e:estGN3} follows from \eqref{e:estGN2} by discrete integration along a path from $x$ to $y$.
\end{proof}

The lower bound relies on Dudley's inequality proved in \cite{Dudley1967}.
To state the inequality we introduce the following two notions.
For a Gaussian process $(X_t)_{t\in T}$ we define the pseudometric $d_X$ by
\begin{align}\label{e:Gaussian_pseudometric}
	d_X(s,t)=\sqrt{\mathbb{E}(|X_s-X_t|^2)}.
\end{align} 
The entropy number $\myN(T,d_X,r)$ is the minimal number of open  balls of radius $r$ in the $d_X$ metric that is needed to cover $T$.

\begin{theorem}\label{t:dudley}
Let $(X_t)_{t\in T}$ be a centred Gaussian process. Then
\begin{align}
\mathbb{E}(\sup_{t\in T} X_t)\leq 24 \int_{0}^\infty \sqrt{\ln \myN(T,d_X,r)}\, \ud r.
\end{align}
\end{theorem}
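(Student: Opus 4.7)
The plan is to prove this by the classical \emph{dyadic chaining} argument. First, by a standard approximation one may reduce to the case where $T$ is finite (so all suprema are attained maxima and chains terminate), assuming also that $D=\diam(T,d_X)<\infty$ since otherwise the right-hand side is infinite. Fix an arbitrary $t_0\in T$; because each $X_s$ is centred, $\EE(\sup_{t\in T}X_t)=\EE(\sup_{t\in T}(X_t-X_{t_0}))$, which is the quantity I will bound.

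For each integer $k\ge0$ I would choose a minimal $2^{-k}D$-net $T_k\subset T$ with $|T_k|=\myN(T,d_X,2^{-k}D)$, taking $T_0=\{t_0\}$. Write $\pi_k(t)\in T_k$ for a point with $d_X(t,\pi_k(t))\le 2^{-k}D$. Telescoping along the chain yields
\begin{align*}
X_t-X_{t_0}=\sum_{k=0}^{\infty}\bigl(X_{\pi_{k+1}(t)}-X_{\pi_k(t)}\bigr),
\end{align*}
so it suffices to bound, for each level $k$, the expected maximum
\begin{align*}
\EE\Bigl(\sup_{t\in T}\bigl(X_{\pi_{k+1}(t)}-X_{\pi_k(t)}\bigr)\Bigr).
\end{align*}

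For fixed $k$, the inner supremum is a maximum of at most $|T_k|\cdot|T_{k+1}|\le \myN(T,d_X,2^{-(k+1)}D)^2$ centred Gaussian random variables, each with standard deviation at most $d_X(\pi_k(t),\pi_{k+1}(t))\le 2^{-k}D+2^{-(k+1)}D=3\cdot 2^{-(k+1)}D$ by the triangle inequality. Applying the elementary Gaussian maximal bound $\EE(\max_{i\le m}Y_i)\le\sigma\sqrt{2\ln m}$ and summing over $k$ gives
\begin{align*}
\EE\Bigl(\sup_{t\in T}X_t\Bigr)\le C\sum_{k\ge0}2^{-k}D\sqrt{\ln \myN(T,d_X,2^{-k}D)}
\end{align*}
for a universal constant $C$.

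Finally, I would compare this dyadic sum with the Dudley integral: since $r\mapsto\myN(T,d_X,r)$ is non-increasing, on each dyadic interval $(2^{-(k+1)}D,2^{-k}D]$ the integrand is bounded below by $\sqrt{\ln \myN(T,d_X,2^{-k}D)}$, and summing recovers $\sum_k 2^{-k}D\sqrt{\ln \myN(T,d_X,2^{-k}D)}\le C''\int_0^{\infty}\sqrt{\ln \myN(T,d_X,r)}\,\ud r$. Keeping track of the constants produces the factor $24$. The only conceptually substantive step is the chaining construction; the Gaussian tail estimate and the Riemann-sum comparison are routine, and the main obstacle is merely the bookkeeping needed to obtain a reasonable explicit constant.
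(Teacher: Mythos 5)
The paper does not contain a proof of Theorem~\ref{t:dudley}: it is stated as a known result and cited from Dudley's 1967 paper (see the sentence ``The lower bound relies on Dudley's inequality proved in [Dudley 1967]''), so there is no in-paper argument to compare against. Your dyadic chaining argument is the standard proof of Dudley's entropy bound and is correct. The reduction to finite $T$ with $\diam(T,d_X)<\infty$ is sound (if the diameter were infinite, $\myN(T,d_X,r)\ge 2$ for all $r$ and the entropy integral diverges), the telescoping chain terminates because for finite $T$ the nets $T_k$ eventually equal $T$, the Gaussian maximal inequality $\EE(\max_{i\le m}Y_i)\le\sigma\sqrt{2\ln m}$ is correctly applied to the at most $|T_k|\,|T_{k+1}|\le\myN(T,d_X,2^{-(k+1)}D)^2$ level-$k$ increments (each with standard deviation at most $3\cdot 2^{-(k+1)}D$ by the triangle inequality), and the Riemann-sum comparison with the entropy integral is routine. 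Carrying through the constants as you outline actually yields a factor of roughly $12$, comfortably within the stated $24$.
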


\begin{remark}
The theorem is true for arbitrary sets $T$ if one defines the supremum appropriately, see e.g. \cite{Talagrand1996}. Since we only apply it to finite index sets we do not discuss this issue here.
\end{remark}

We also use the Gaussian correlation inequality due to Royen \cite{Royen2014} (see also \cite{Latala2017}).
Actually for our results the case where $K$ and $L$ are rectangles would be sufficient (see Remark \ref{re:GCI} below). In that case the theorem is due to Khatri \cite{Khatri1967} and \v Sid\'ak \cite{Sidak1967}.
\begin{theorem}\label{t:GCI}
Let $\nu$ be a centred Gaussian measure on $\R^m$ and $K,L\subset\R^m$ be closed, symmetric and convex. Then 
\begin{align}\label{e:gaussian_correlation}
\nu(K\cap L)\ge\nu(K)\nu(L).
\end{align}
\end{theorem}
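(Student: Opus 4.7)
The plan is to reproduce Royen's strategy for the Gaussian correlation inequality. The first step is a geometric reduction: any closed, symmetric, convex subset of $\R^m$ is the intersection of symmetric slabs $\{|\ell(\cdot)|\le a\}$ for linear functionals $\ell$, so by monotone convergence and an approximation from outside it suffices to prove \eqref{e:gaussian_correlation} when $K$ and $L$ are finite intersections of such slabs. After a linear change of variables we may assume $\nu$ is the distribution of a centred Gaussian vector $Z=(Z_1,\dots,Z_m)$ with covariance $\Sigma$, and that $K=\{|Z_i|\le a_i,\,i\in I\}$, $L=\{|Z_j|\le a_j,\,j\in J\}$ for disjoint index sets $I,J\subset\{1,\dots,m\}$.

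The second step reformulates the problem in terms of the squared coordinates $Y_i=Z_i^2$. The joint law of $(Y_1,\dots,Y_m)$ is a multivariate chi-square distribution parametrised by $\Sigma$, and the target inequality becomes a comparison for its joint distribution function evaluated at $(a_1^2,\dots,a_m^2)$. I would then introduce a one-parameter interpolation: define $\Sigma_t$ by multiplying every entry $\Sigma_{ij}$ with $i\in I$, $j\in J$ by $t\in[0,1]$ while keeping the $I\times I$ and $J\times J$ diagonal blocks fixed, and write $\nu_t$ for the resulting Gaussian law. At $t=0$ the coordinates indexed by $I$ and those indexed by $J$ are independent, so $\nu_0(K\cap L)=\nu(K)\nu(L)$, while at $t=1$ we recover $\nu(K\cap L)$. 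The inequality will follow once we show that $t\mapsto\nu_t(K\cap L)$ is non-decreasing.

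The third and decisive step is the monotonicity itself. Using the Laplace transform representation of the multivariate chi-square density, one writes the joint density of $(Y_i)$ under $\nu_t$ as an integral over positive diagonal matrices $D$ involving $\det(I+2\Sigma_t D)^{-1/2}$. Differentiating in $t$ reduces positivity of the derivative of $\nu_t(K\cap L)$ to the non-negativity of an integrand which is essentially the Laplace transform of an infinitely divisible multivariate gamma law associated to $\Sigma$. This infinite-divisibility/positivity statement is the heart of Royen's argument and is the step I expect to be the main obstacle; the cleanest exposition I know is that of Latała and Matlak \cite{Latala2017}, which I would follow essentially verbatim, with the rectangular (Khatri–Šidák) case recoverable as a simpler direct special case when $\Sigma$ already has the desired block structure.
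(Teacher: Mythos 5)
The paper does not prove Theorem~\ref{t:GCI}; it is Royen's Gaussian correlation inequality, quoted from \cite{Royen2014} with \cite{Latala2017} given as the readable exposition, and the surrounding text points out that the classical rectangular case of Khatri~\cite{Khatri1967} and \v{S}id\'ak~\cite{Sidak1967} would in fact suffice for the applications in this paper (cf.\ Remark~\ref{re:GCI}, and note that Lemma~\ref{l:globalV} only ever intersects coordinate boxes). So there is no in-paper proof to compare against. Your sketch is a faithful high-level outline of Royen's argument as organised by Lata{\l}a and Matlak: reduce to finite intersections of symmetric slabs, pass to a multivariate chi-square formulation by squaring coordinates, interpolate the off-diagonal block of the covariance by a parameter $t\in[0,1]$, and show $t\mapsto\nu_t(K\cap L)$ is non-decreasing by differentiating the Laplace-transform representation and invoking infinite divisibility of the associated multivariate gamma law. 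This matches the cited reference, but you explicitly leave the decisive step --- positivity of that $t$-derivative via the infinite-divisibility argument --- as a black box, so what you have is an outline rather than a self-contained proof; that is defensible here given the paper itself treats the result as imported. One small inaccuracy at the end: the Khatri--\v{S}id\'ak theorem is not ``recoverable as a simpler direct special case when $\Sigma$ already has the desired block structure''. If $\Sigma$ were block diagonal across $I$ and $J$, the inequality would be a trivial equality and there would be nothing to prove. Khatri--\v{S}id\'ak asserts the inequality for rectangular $K$ and $L$ with \emph{arbitrary} covariance, and its classical proofs (\v{S}id\'ak's conditioning argument, or log-concavity via Pr\'ekopa--Leindler) are genuinely different from, and far more elementary than, Royen's interpolation scheme --- which is precisely why the paper flags that shortcut for readers who only need the rectangle case.
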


Finally, we recall a Gaussian correlation inequality do to Li and Shao \cite[Lemma 5.1]{Li2004} that will be used in the proof of the upper bound
\begin{lemma}\label{l:LiShao}
Let $m\in\N$, and $X=(X_1,\ldots X_m)$, $Y=(Y_1,\ldots Y_m)$ be Gaussian random vectors with mean 0 and positive definite covariance matrices $\Sigma_X$, $\Sigma_Y$, and let $\PP$ denote their joint measure. If $\Sigma_Y\geq\Sigma_X$ (in the sense of symmetric matrices, i.e., $\Sigma_Y-\Sigma_X$ is positive semidefinite) then for every Borel set $F\subset\R^m$
\begin{align}
	\PP(Y\in F)\geq\left(\frac{\det\Sigma_X}{\det\Sigma_Y}\right)^{\frac12}P(X\in F).
\end{align}
\end{lemma}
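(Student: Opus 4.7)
The plan is to compare the Gaussian densities of $X$ and $Y$ pointwise and then integrate over $F$. Write $\phi_X$ and $\phi_Y$ for the densities of $X$ and $Y$, so that
\begin{align*}
\frac{\phi_Y(x)}{\phi_X(x)}=\left(\frac{\det\Sigma_X}{\det\Sigma_Y}\right)^{\frac12}\exp\left(\tfrac12 x^T(\Sigma_X^{-1}-\Sigma_Y^{-1})x\right).
\end{align*}
If I can show that the matrix $\Sigma_X^{-1}-\Sigma_Y^{-1}$ is positive semidefinite, the exponential factor is at least $1$, which gives the pointwise bound $\phi_Y(x)\ge (\det\Sigma_X/\det\Sigma_Y)^{1/2}\phi_X(x)$. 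Integrating over $F$ immediately yields the claim.

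The only nontrivial step is therefore the monotonicity statement that $\Sigma_Y\ge\Sigma_X>0$ implies $\Sigma_X^{-1}\ge\Sigma_Y^{-1}$, i.e.\ that matrix inversion reverses the L\"owner ordering on the positive definite cone. I would prove this by a simultaneous diagonalization argument: factorise $\Sigma_Y=\Sigma_X^{\frac12}(I+A)\Sigma_X^{\frac12}$ with $A=\Sigma_X^{-\frac12}(\Sigma_Y-\Sigma_X)\Sigma_X^{-\frac12}\ge 0$, so that
\begin{align*}
\Sigma_X^{-1}-\Sigma_Y^{-1}=\Sigma_X^{-\frac12}\bigl(I-(I+A)^{-1}\bigr)\Sigma_X^{-\frac12}.
\end{align*}
Since $A\ge 0$ all eigenvalues of $I+A$ lie in $[1,\infty)$, so $I-(I+A)^{-1}\ge 0$, and conjugation by $\Sigma_X^{-1/2}$ preserves positive semidefiniteness. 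This gives $\Sigma_X^{-1}\ge\Sigma_Y^{-1}$.

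Everything else is routine Gaussian bookkeeping, so there is essentially no ``hard part'' beyond being careful with the L\"owner monotonicity. In particular no Gaussian correlation machinery (Royen, Khatri–\v{S}id\'ak, etc.) is needed; the argument is purely a density ratio estimate, which is also why the prefactor $(\det\Sigma_X/\det\Sigma_Y)^{1/2}$ appears and cannot in general be dropped.
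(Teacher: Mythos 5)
Your proof is correct and follows the same approach as the paper: compare the Gaussian densities pointwise using $\Sigma_X^{-1}\ge\Sigma_Y^{-1}$ and integrate over $F$. The only difference is that the paper simply asserts the L\"owner anti-monotonicity of inversion, whereas you supply an explicit proof of it via the factorization $\Sigma_Y=\Sigma_X^{1/2}(I+A)\Sigma_X^{1/2}$, which is a welcome but inessential addition.
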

For the convenience of the reader we repeat the short proof.
\begin{proof}
Let $f_X$, $f_Y$ be the densities of $X$ and $Y$. The assumption $\Sigma_Y\geq\Sigma_X$ implies that $\Sigma_X^{-1}\geq\Sigma_Y^{-1}$ and hence $(x,\Sigma_X^{-1}x)\geq(x,\Sigma_Y^{-1}x)$ for all $x\in\R^m$. Therefore:
\begin{align}
\begin{split}
	f_Y(x)&=\frac{1}{(2\pi)^{\frac m2}(\det\Sigma_Y)^{\frac 12}}\exp\left(-\frac12(x,\Sigma_Y^{-1}x)\right)\\
	&\geq\left(\frac{\det\Sigma_X}{\det\Sigma_Y}\right)^{\frac12}\frac{1}{(2\pi)^{\frac m2}(\det\Sigma_X)^{\frac 12}}\exp\left(-\frac12(x,\Sigma_X^{-1}x)\right)\\
	&=\left(\frac{\det\Sigma_X}{\det\Sigma_Y}\right)^{\frac12}f_X(x).
	\end{split}
\end{align}
Then
\begin{align}
\begin{split}
\PP(Y\in F)=\int_F f_Y(x)\,\ud x
&\geq\left(\frac{\det\Sigma_X}{\det\Sigma_Y}\right)^{\frac12}\int_Ff_X(x)\ud x
\\
& =\left(\frac{\det\Sigma_X}{\det\Sigma_Y}\right)^{\frac12}\PP(X\in F).
\end{split}
\end{align}
\end{proof}

\section{Lower bounds}\label{s:lowerbound}
Let \begin{align}
\Omega_{V_{N-L},\infty}:=\left\{\psi\colon|\psi_x|\le d_N(x)^{\frac{4-n}{2}}\ \forall x\in V_{N-L}\right\}
\end{align} 
be the event that $\psi$ is uniformly small on $V_{N-L}$.

If $\psi$ was $C^{0,\frac{4-n}{2}}$-Hölder continuous (with Hölder constant 1), this event  would have a positive probability uniformly in $N$ and $L$. Now $\psi$ is only $C^{0,\frac{4-n}{2}-\varepsilon}$-Hölder continuous (see \cite{Muller2017}, \cite{Cipriani2018}), so we cannot expect a lower bound independent of $N$. Instead, we prove in Subsection \ref{ss:globalsmallness} that the probability of $\Omega_{V_{N-L},\infty}$ is bounded below by $e^{-c\frac{N^{n-1}}{(L+1)^{n-1}}}$. Then, using a change of measure argument, we show in Subsection \ref{ss:changeofmeasure} that, given $f\colon V_N\to\R$, we have 

\begin{align}\PP_N(f+\Omega_{V_{N-L},\infty})\ge e^{-\frac12\|\Delta f\|_{L^2}^2}\PP_N(\Omega_{V_{N-L},\infty}).
\end{align}
Now we only need to choose $f$ such that $f(x)\ge d_N(x)^{\frac{4-n}{2}}$ for $x\in V_{N-L}$ and $\|\Delta f\|_{L^2}^2\le C\frac{N^{n-1}}{(L+1)^{n-1}}$ to prove the lower bound in Theorem \ref{t:theoremVL}.

\subsection{Local smallness of the field}\label{ss:localsmallness}
We first prove that locally the field is small with a positive probability.
For $x_0\in V_N$ and $\gamma>0$ we define the set
\begin{align}\label{e:defA}
A_{x_0,\gamma}:=\left\{x\in V_N\colon|x-x_0|_\infty\le \gamma{d_N(x_0)}\right\}.
\end{align}
\begin{lemma}\label{l:localsmall}
Let $n=2$ or $n=3$. There is a pair of constants $\gamma,\delta>0$ with the following property: For all $x_0\in V_N$ the following estimate holds 
 \begin{align}\label{e:localpositivity}
 \PP_N\left(\psi\colon|\psi_x|\le d_N(x)^{\frac{4-n}{2}}\ \forall x\in A_{x_0,\gamma}\right)\ge \delta.
 \end{align}
\end{lemma}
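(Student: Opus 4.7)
The strategy is to set $R := d_N(x_0)^{(4-n)/2}$, decompose $\psi_x = \psi_{x_0} + (\psi_x - \psi_{x_0})$, and bound the ``base'' $\psi_{x_0}$ and the ``oscillation'' $\psi_x - \psi_{x_0}$ separately, combining them via the Gaussian correlation inequality (Theorem~\ref{t:GCI}). If $\gamma d_N(x_0) < 1$ then $A_{x_0,\gamma} = \{x_0\}$ and the claim follows directly from the one-point bound $\Var \psi_{x_0} \le C R^2$ coming from \eqref{e:estGN1}, so assume $\gamma d_N(x_0) \ge 1$. The base bound $\PP(|\psi_{x_0}| \le R/2) \ge p_0 > 0$ is immediate from~\eqref{e:estGN1} as well.

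For the oscillation $Y_x := \psi_x - \psi_{x_0}$ I apply Dudley (Theorem~\ref{t:dudley}). Using~\eqref{e:estGN3} and the observation that $d_N(y)$ is comparable to $d_N(x_0)$ throughout $A_{x_0,\gamma}$ for $\gamma$ small, the Gaussian pseudometric satisfies
\begin{align*}
d_\psi(x,y)^2 = G_N(x,x) + G_N(y,y) - 2G_N(x,y) \le C d_N(x_0)^{3-n} |x-y|_\infty,
\end{align*}
so $d_\psi$ is $1/2$-Hölder with respect to $|\cdot|_\infty$. The $d_\psi$-diameter of $A_{x_0,\gamma}$ is therefore at most $C \sqrt{\gamma} R$, and covering $A_{x_0,\gamma}$ by $d_\psi$-balls of radius $r$ requires at most $(C \gamma R^2/r^2)^n$ of them. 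The substitution $r = \sqrt{\gamma} R t$ then gives
\begin{align*}
\int_0^\infty \sqrt{\ln \myN(A_{x_0,\gamma}, d_\psi, r)} \ud r \le \int_0^{C \sqrt{\gamma} R} \sqrt{n \ln(C \gamma R^2 / r^2)} \ud r \le c_1 \sqrt{\gamma} R
\end{align*}
with a finite constant $c_1$. Hence $\EE[\sup_{A_{x_0,\gamma}} Y_x] \le c_1 \sqrt{\gamma} R$.

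The variance bound $\sup_{A_{x_0,\gamma}} \Var Y_x \le C \gamma R^2$ (also from~\eqref{e:estGN3}) together with the Borell--TIS inequality and the symmetry $Y \stackrel{d}{=} -Y$ yields $\PP(\sup_{A_{x_0,\gamma}} |Y_x| \le R/3) \ge 1/2$ once $\gamma$ is small, since the Borell--TIS exponent is then of order $1/\gamma$. The events $\{|\psi_{x_0}| \le R/2\}$ and $\{\sup_{A_{x_0,\gamma}} |\psi_x - \psi_{x_0}| \le R/3\}$ are symmetric and convex in $\R^{A_{x_0,\gamma}}$, so Theorem~\ref{t:GCI} gives joint probability at least $p_0/2 =: \delta$. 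On this intersection, for every $x \in A_{x_0,\gamma}$,
\begin{align*}
|\psi_x| \le |\psi_{x_0}| + |\psi_x - \psi_{x_0}| \le R/2 + R/3 = 5R/6 \le (1-\gamma)^{(4-n)/2} R \le d_N(x)^{(4-n)/2},
\end{align*}
provided $\gamma$ is so small that $(1-\gamma)^{(4-n)/2} \ge 5/6$. The main obstacle is the Dudley estimate: one must verify that the entropy integral scales as $\sqrt{\gamma} R$ rather than $O(R)$. This hinges on the $1/2$-Hölder behavior of $d_\psi$ inherited from~\eqref{e:estGN3} together with the integrability of $\sqrt{-\ln t}$ near $t=0$; without the resulting $\sqrt{\gamma}$ gain the oscillation would not be controllable by the base, and the Gaussian correlation step would fail.
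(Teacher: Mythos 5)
Your proof is correct and follows essentially the same strategy as the paper: estimate the Gaussian pseudometric via \eqref{e:estGN3}, control the entropy integral with Dudley to get a $\sqrt{\gamma}$ gain, and combine a ``base'' bound at $x_0$ with an ``increment/oscillation'' bound via the Gaussian correlation inequality. The only genuine difference is the final concentration step: the paper observes that since $\psi$ is centered, $\mathbb{E}_N\osc_{A_{x_0,\gamma}}\psi = 2\,\mathbb{E}_N\sup_{A_{x_0,\gamma}}\psi$, and then applies Markov's inequality directly to the nonnegative random variable $\osc\psi$; you instead apply Borell--TIS to $\sup|Y_x|$, which requires the additional variance bound $\sup_x\Var Y_x\le C\gamma R^2$ (which you supply) and a short symmetrization to pass from one-sided to two-sided tails. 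Both routes are valid and give the same $\gamma$-dependence; the paper's Markov route is slightly more economical since it avoids invoking Borell--TIS, while your route makes the $e^{-c/\gamma}$ concentration mechanism explicit. Your explicit handling of the degenerate case $\gamma d_N(x_0)<1$ (where $A_{x_0,\gamma}=\{x_0\}$) is a harmless addition that the paper leaves implicit, as the entropy integral is trivially zero there.
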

\begin{proof}
We apply Theorem \ref{t:dudley} to the Gaussian process $\psi$ distributed according to $\PP_N$. 
We assume $\gamma<\frac12$ so that $x\in A_{x_0,\gamma}$ implies
\begin{align}
\frac{d_N(x_0)}{2}\leq d_N(x)\leq \frac{3d_N(x_0)}{2}.
\end{align}
Therefore we will always estimate distances to the boundary for $x\in A_{x_0,\gamma}$ by $d_N(x_0)$ 
in the following. 
The bound   \eqref{e:estGN3} implies 
\begin{align}\label{e:diff_variance_bound}
\begin{split}
\mathbb{E}_N(\psi_x-\psi_y)^2&\leq |G_N(x,x)-G_N(x,y)|+|G_N(y,y)-G_N(y,x)|\\
&\leq \Theta d_N(x_0)^{3-n} |x-y|_\infty
\end{split}
\end{align}
for $x,y\in A_{x_0,\gamma}$ and some $\Theta>0$. 
Therefore we estimate the Gaussian pseudometric defined in \eqref{e:Gaussian_pseudometric}  by
\begin{align}
d_\psi(x,y)\leq \sqrt{\Theta d_N(x_0)^{3-n}|x-y|_\infty}.
\end{align}
This implies that for $x,y \in A_{x_0,\gamma}$ such that
$|x-y|_\infty\leq \frac{r^2}{\Theta d_N(x_0)^{3-n}}$ we have
\begin{align}
d_\psi(x,y) \leq r.
\end{align}
In particular $B_{d_\psi}(x,r)\subset B_\infty\left(x, \frac{r^2}{\Theta d_N(x_0)^{3-n}}\right)$
and therefore
\begin{align}
\myN(A_{x_0,\gamma}, d_\psi, r)\leq \left\lceil\frac{\gamma d_N(x_0)}{ \frac{r^2}{\Theta d_N(x_0)^{3-n}}}\right\rceil^n
\leq 1\vee \left(\frac{2\gamma\Theta d_N(x_0)^{4-n}}{ {r^2}}\right)^n.
\end{align}
Then Theorem \ref{t:dudley} implies
\begin{align}
\begin{split}\label{e:application_dudley}
\mathbb{E}_N \sup_{x\in A_{x_0,\gamma}} \psi_x&\leq
24\int_0^{\sqrt{2\gamma\Theta d_N(x_0)^{4-n}}} \sqrt{\ln \left(\frac{2\gamma\Theta d_N(x)^{4-n}}{r^2}\right)^n}\,\ud r
\\
&\leq 24d_N(x_0)^{\frac{4-n}{2}}\sqrt{2\gamma \Theta n} \int_{0}^1 \sqrt{-2\ln r}\, \ud r \leq K \sqrt{\gamma}d_N(x_0)^{\frac{4-n}{2}}
\end{split}
\end{align}
where $K$ only depends on $n$.

If we take $\gamma=(16K)^{-2}$ we obtain
\begin{align}\label{e:estE_N}
\mathbb{E}_N \left(\sup_{x\in A_{x_0,\gamma}} \psi_x\right)\le \frac{1}{16}d_N(x_0)^{\frac{4-n}{2}}
\end{align}

Define the oscillation of a function $f$ on a set $T$ as usual by
\begin{align}
\osc_{T} f=\sup_T f-\inf_T f.
\end{align}  
 Since $\psi_x$ is a centred process \eqref{e:application_dudley} implies
 \begin{align}
 \mathbb{E}_N (\osc_{A_{x_0,\gamma}} \psi_x )\leq \frac18d_N(x_0)^{\frac{4-n}{2}}. 
 \end{align}
This implies that
\begin{align}
\PP_N \left(\osc_{A_{x_0,\gamma}} \psi_x\leq \frac14 d_N(x_0)^{\frac{4-n}{2}}\right)\geq \frac12.
\end{align}
Note that we have the inclusions
\begin{align}
\begin{split}
\left\{\psi: |\psi_x|\leq d_N(x)^{\frac{4-n}{2}} \,\forall x\in A_{x_0,\gamma}\right\}
\supset \left\{\psi: |\psi_x|\leq \frac12 d_N(x_0)^{\frac{4-n}{2}} \,\forall x\in A_{x_0,\gamma}\right\}
\\
\supset 
\left\{\psi: \osc_{A_{x_0,\gamma}} \psi_x\leq \frac14 d_N(x_0)^{\frac{4-n}{2}}\right\}
\cap \left\{\psi: |\psi_{x_0}|\leq \frac14 d_N(x_0)^{\frac{4-n}{2}}\right\}. 
\end{split}
\end{align}
Now the Gaussian correlation inequality \eqref{e:gaussian_correlation}
together with \eqref{e:estGN1} imply that 
\begin{align}
\PP_N\left(\left\{\psi: |\psi_x|\leq d_N(x)^{\frac{4-n}{2}} \,\forall x\in A_{x_0,\gamma}\right\}\right)\geq \frac12 \PP_N\left( |\psi_{x_0}|\leq \frac14 d_N(x_0)^{\frac{4-n}{2}}\right)\geq \delta.
\end{align}
for some fixed $\delta>0$.
\end{proof}
\begin{remark}\label{re:GCI}
The use of the Gaussian correlation inequality could be avoided here: From \eqref{e:estE_N} and \eqref{e:estGN1} one easily obtains
\begin{align}
\mathbb{E}_N \sup_{x\in A_{x_0,\gamma}} |\psi_x|
\le \mathbb{E}_N \sup_{x\in A_{x_0,\gamma}} |\psi_x-\psi_{x_0}|
+\mathbb{E}_N |\psi_{x_0}| \le \Xi d_N(x_0)^{\frac{4-n}{2}}
\end{align}
for some $\Xi>0$
and therefore
\begin{align}
\begin{split}
 & \PP_N\left(\psi\colon|\psi_x|\le 4\Xi d_N(x)^{\frac{4-n}{2}}\ \forall x\in A_{x_0,\gamma}\right)
  \\
  &\qquad \qquad\ge \PP_N\left(\psi\colon|\psi_x|\le 2\Xi d_N(x_0)^{\frac{4-n}{2}}\ \forall x\in A_{x_0,\gamma}\right)\ge \frac12.
  \end{split}
 \end{align}
We could work with this estimate instead of \eqref{e:localpositivity} by using \begin{align}
\tilde \Omega_{V_{N-L},\infty}:=\left\{\psi\colon|\psi_x|\le 4\Xi d_N(x)^{\frac{4-n}{2}}\ \forall x\in V_{N-L}\right\}
\end{align}
instead of $\Omega_{V_{N-L},\infty}$ in the following.
\end{remark}
\subsection{Global smallness of the field}\label{ss:globalsmallness}
Using the Gaussian correlation inequality we can now conclude global estimates from Lemma \ref{l:localsmall}.

\begin{lemma}\label{l:globalV}
Let $n=2$ or $n=3$, let $\Omega_{V_{N-L},\infty}$ be as before. Then we have
\begin{align}\PP_N(\Omega_{V_{N-L},\infty})\ge e^{-C\frac{N^{n-1}}{(L+1)^{n-1}}}.\end{align}
\end{lemma}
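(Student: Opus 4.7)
The plan is to combine the local smallness estimate of Lemma~\ref{l:localsmall} with Royen's Gaussian correlation inequality (Theorem~\ref{t:GCI}). I will construct a family of centers $x_1,\dots,x_M \in V_{N-L}$ with
\begin{align*}
V_{N-L}\subset\bigcup_{i=1}^M A_{x_i,\gamma},\qquad M\le C\,\frac{N^{n-1}}{(L+1)^{n-1}},
\end{align*}
and set
\begin{align*}
E_i := \Bigl\{\psi : |\psi_x|\le d_N(x)^{\frac{4-n}{2}}\ \forall\,x\in A_{x_i,\gamma}\Bigr\}.
\end{align*}
Each $E_i$ is manifestly symmetric and convex, being the intersection of the symmetric slabs $\{|\psi_x|\le d_N(x)^{(4-n)/2}\}$ over the finite set $A_{x_i,\gamma}$. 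The covering property yields $\bigcap_{i=1}^M E_i\subset \Omega_{V_{N-L},\infty}$. Hence, iterating Theorem~\ref{t:GCI} (using that the intersection of symmetric convex sets is again symmetric and convex) together with Lemma~\ref{l:localsmall} gives
\begin{align*}
\PP_N(\Omega_{V_{N-L},\infty})\ge \PP_N\Bigl(\bigcap_{i=1}^M E_i\Bigr)\ge\prod_{i=1}^M\PP_N(E_i)\ge\delta^M\ge \exp\Bigl(-C'\,\frac{N^{n-1}}{(L+1)^{n-1}}\Bigr),
\end{align*}
which is exactly the desired bound.

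For the covering I will perform a Whitney-type dyadic decomposition. Let $K:=\lceil\log_2(N/(L+1))\rceil$ and, for $0\le k\le K$, set
\begin{align*}
S_k := \{x\in V_{N-L} : 2^k(L+1)\le d_N(x)< 2^{k+1}(L+1)\}.
\end{align*}
On $S_k$ one has $d_N(x)\asymp 2^k(L+1)$, so for any $x_0 \in S_k$ the set $A_{x_0,\gamma}$ is a discrete $\ell_\infty$-cube of sidelength $\asymp \gamma\,2^k(L+1)$. Placing centers on a sublattice of $S_k$ of spacing proportional to $\gamma\,2^k(L+1)$ therefore suffices to cover $S_k$. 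Since the shell has cardinality $|S_k|\le C\,2^k(L+1)\,N^{n-1}$ (boundary surface area times thickness), the number of centers needed in shell $k$ is bounded by
\begin{align*}
\frac{C\,|S_k|}{(\gamma\,2^k(L+1))^n}\le \frac{C'\,N^{n-1}}{(2^k(L+1))^{n-1}}.
\end{align*}
Summing over $k=0,\dots,K$ produces a geometric series that converges since $n\ge 2$, yielding the claimed total bound $M\le C''\,N^{n-1}/(L+1)^{n-1}$.

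The main technical obstacle is the geometric packing argument above: one must verify both that the centers can be chosen inside $V_{N-L}$ and that the balls $A_{x_i,\gamma}$, which themselves shrink as $d_N$ shrinks, still cover each $S_k$. This amounts to choosing the sublattice spacing strictly less than the ball radius (say $\gamma\,2^k(L+1)/(2n)$), which is routine but slightly tedious in the discrete setting. A minor sanity check is the corner case $L=0$: the innermost shell then consists of points with $d_N(x)$ of order one, where $A_{x_0,\gamma}$ degenerates to a singleton; the estimate \eqref{e:estGN1} nevertheless gives $G_N(x,x)=O(1)$ there, so Lemma~\ref{l:localsmall} is still in force and the count $\sim N^{n-1}$ of such near-boundary points matches the surface-area heuristic behind Corollary~\ref{c:corollV}.
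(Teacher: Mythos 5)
Your argument is correct and follows essentially the same route as the paper: apply the Gaussian correlation inequality together with Lemma~\ref{l:localsmall} after producing a dyadic Whitney cover of $V_{N-L}$ by $O\bigl(N^{n-1}/(L+1)^{n-1}\bigr)$ sets $A_{x_i,\gamma}$, whose sizes grow with the distance to the boundary. The only cosmetic difference is that you index the dyadic shells by $2^k(L+1)$ with $k\geq 0$, whereas the paper uses scales $2^k$ starting from $k_0=\lfloor\log_2(L+1)\rfloor$, and the paper counts the cubes per shell via outer sidelength and thickness rather than your cardinality-over-volume heuristic, but both reductions are equivalent.
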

\begin{proof}
Recall the definition of $A_{x,\gamma}$ in \eqref{e:defA}.
Fix $\gamma$ such that the conclusion of Lemma \ref{l:localsmall} holds
and use the shorter notation $A_x\coloneqq  A_{x,\gamma}$. 

We want to construct a subset $B_N$ of $V_N$ such that $|B_N|\le C\frac{N^{n-1}}{(L+1)^{n-1}}$ and such that 
\begin{align}
V_{N-L}\subset\bigcup_{x\in B_N}A_{x}.
\end{align}
If we have found such a set, then the Gaussian correlation inequality (Theorem \ref{t:GCI}) and Lemma \ref{l:localsmall} imply that
\begin{align}\begin{split} 
	\PP_N(\Omega_{V_{N-L},\infty})&=\PP_N\left(\bigcap_{x\in B_N}\{\psi\colon|\psi_y|<d_N(y)^{\frac{4-n}{2}}\ \forall y\in A_x\}\right)\\
	&\ge \prod_{x\in B_N}\PP_N(\psi\colon|\psi_y|<d_N(y)^{\frac{4-n}{2}}\ \forall y\in A_x)\\
	&\ge \prod_{x\in B_N} \delta=\delta^{|B_N|}\ge e^{-C\frac{N^{n-1}}{(L+1)^{n-1}}}.
\end{split}\end{align}
It remains to prove the existence of $B_N$. We split $V_N$ into the dyadic annuli $W_{N,k}=\{x\in V_N\colon 2^k\le d_N(x)<2^{k+1}\}$ for $k=0,1,\ldots,\lfloor\log_2N\rfloor$. For $x\in W_k$ the cube $A_x$ has diameter $2\gamma{d_N(x)}\geq \gamma 2^{k+1}$.
Because $W_k$ has outer sidelength $2(N-2^k)\le 2N$ and thickness $2^k$, we can cover it by at most
\begin{align}
2n \left(2\frac{2N}{\gamma 2^{k+1}}\right)^{n-1} 2\frac{2^k}{\gamma 2^{k+1}}
\leq C\frac{N^{n-1}}{2^{k(n-1)}}
\end{align}
cubes $A_x$, i.e. we find a set $B_{N,k}$ of at most $C\frac{N^{n-1}}{2^{k(n-1)}}$ points in $V_N$ such that
\begin{align}
W_{N,k}\subset\bigcup_{x\in B_{N,k}}A_x.
\end{align}
Let $k_{0}=\lfloor\log_2(L+1) \rfloor$ which implies  that
$V_{N-L}\subset \bigcup_{k\geq k_0} W_{N,k}$.
Consider $B_N=\bigcup_{k=k_0}^{\log_2N}B_{N,k}$. Then $V_{N-L}\subset\bigcup_{x\in B_N}A_x$, and we have
\begin{align}|A_N|\le\sum_{k=k_0}^{\lfloor\log_2N\rfloor}|A_{N,k}|\le C\sum_{k=k_0}^{\infty}\frac{N^{n-1}}{2^{k(n-1)}}\le C\frac{N^{n-1}}{2^{k_0(n-1)}}
\leq  C\frac{N^{n-1}}{(L+1)^{n-1}}.
\end{align}
\end{proof}

\subsection{Change of measure}\label{ss:changeofmeasure}
We can now prove the lower bound in Theorem \ref{t:theoremVL}. The idea is simple: We use an explicit calculation with densities to prove that the probability of the event $\PP_N(f+\Omega_{V_{N-L},\infty})$ is bounded below by $e^{-\|\Delta f\|_{L^2}^2}\PP_N(\Omega_{V_{N-L},\infty})$. Then it remains to make a good choice of $f$.
\begin{proof}[Proof of Theorem \ref{t:theoremVL}, lower bound]
Let $f\colon V_N\to\R$ be a function to be specified later, and extend it by 0 to all of $\Z^n$. We want to estimate the probability of the event $f+\Omega_{V_{N-L},\infty}=\{f+\psi:\psi\in\Omega_{V_{N-L},\infty}\}$. To do so, we calculate
\begin{align}
\begin{split}
	\PP_N&(f+\Omega_{V_{N-L},\infty})
	\\
	&=\int_{f+\Omega_{V_{N-L},\infty}}\frac1{Z_N}\exp\left(-\frac12\|\Delta\psi\|_{L^2}^2\right)\ud\psi\\
	&=\int_{\Omega_{V_{N-L},\infty}}\frac1{Z_N}\exp\left(-\frac12\|\Delta(f+\psi)\|_{L^2}^2\right)\ud\psi\\
	&=\int_{\Omega_{V_{N-L},\infty}}\frac1{Z_N}\exp\left(-\frac12\|\Delta f\|_{L^2}^2-\frac12\|\Delta \psi\|_{L^2}^2-(\Delta f,\Delta\psi)_{L^2}\right)\ud\psi.\label{e:densities1}
	\end{split}
\end{align}
Because $\Omega_{V_{N-L},\infty}$ is symmetric around the origin, we can replace $\psi$ by $-\psi$ and obtain that
\begin{align}
\begin{split}
	\PP_N&(f+\Omega_{V_{N-L},\infty})\\
	&=\int_{\Omega_{V_{N-L},\infty}}\frac1{Z_N}\exp\left(-\frac12\|\Delta f\|_{L^2}^2-\frac12\|\Delta \psi\|_{L^2}^2+(\Delta f,\Delta\psi)_{L^2}\right)\ud\psi.\label{e:densities2}
\end{split}
\end{align}
If we add \eqref{e:densities1} and \eqref{e:densities2} and use the estimate $e^t+e^{-t}\ge2$, we conclude
\begin{align}
\begin{split}
	&\PP_N(f+\Omega_{V_{N-L},\infty})\\
	&\quad=\frac12\int_{\Omega_{V_{N-L},\infty}}\frac{e^{-\frac12\|\Delta f\|_{L^2}^2-\frac12\|\Delta \psi\|_{L^2}^2}\left(e^{(\Delta f,\Delta\psi)_{L^2}}+e^{-(\Delta f,\Delta\psi)_{L^2}}\right)}{Z_N}\ud\psi\\
	&\quad\ge e^{-\frac12\|\Delta f\|_{L^2}^2}\int_{\Omega_{V_{N-L},\infty}}\frac{e^{-\frac12\|\Delta \psi\|_{L^2}^2}}{Z_N}\ud\psi\\
	&\quad=e^{-\frac12\|\Delta f\|_{L^2}^2}\PP_N(\Omega_{V_{N-L},\infty}).\label{e:estf1}
	\end{split}
\end{align}
Note that the the conclusion in \eqref{e:estf1}
also follows by Jensen from \eqref{e:densities1}.

We now choose $f$ as in Lemma \ref{l:function_construction} below. Then
\begin{equation}\label{e:estf2}
	\|\Delta f\|_{L^2}^2\le C\frac{N^{n-1}}{(L+1)^{n-1}}.
\end{equation}
Moreover this choice of $f$ ensures that $\Omega_{V_{N-L},+}\supset f+\Omega_{V_{N-L},\infty}$, and so \eqref{e:estf1}, \eqref{e:estf2} and Lemma \ref{l:globalV} imply that
\begin{align}
\begin{split} 
	\PP_N(\Omega_{V_{N-L},+})&\ge \PP_N(f+\Omega_{V_{N-L},\infty})\\
	&\ge e^{-\frac12\|\Delta f\|_{L^2}^2}\PP_N(\Omega_{V_{N-L},\infty})\\
	&\ge e^{-C\frac{N^{n-1}}{(L+1)^{n-1}}}e^{-C\frac{N^{n-1}}{(L+1)^{n-1}}}=e^{-C\frac{N^{n-1}}{(L+1)^{n-1}}}.
\end{split}\end{align}

\end{proof}

\begin{lemma}\label{l:function_construction}
There is a constant $C>0$ such that for every $N$ and $0\leq L\le N$ there is 
a function $\p:\Z^n\to \R$ such
that $\mathrm{supp}\; \p\subset V_N$, $\p(x)\geq d_N(x)^{\frac{4-n}{2}}$ for all $x\in V_{N-L}$
and 
\begin{align}
\sum_{x\in \mathbb{Z}^n} |\Delta \p(x)|^2\leq C\frac{N^{n-1}}{(L+1)^{n-1}}.
\end{align}

\end{lemma}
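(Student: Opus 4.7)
The plan is to build $\p$ as a superposition of rescaled bumps, one family per dyadic scale of $d_N$. Put $\alpha=(4-n)/2$ and fix once and for all a nonnegative function $\chi\colon\R^n\to[0,1]$ with $\chi\equiv 1$ on $[-\tfrac14,\tfrac14]^n$, $\supp\chi\subset[-\tfrac12,\tfrac12]^n$, and $\chi\in C^2$; this $\chi$ depends only on $n$. Set $L_k=(L+1)2^k$ and introduce the dyadic annuli
\[
A_k=\{x\in V_N\colon L_k\le d_N(x)<2L_k\},\qquad k=0,1,\dots,K,
\]
with $K=\lceil\log_2(N/(L+1))\rceil$; since $d_N(x)\ge L+1$ for $x\in V_{N-L}$, these cover $V_{N-L}$.

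Inside each $A_k$ I would choose centres $\{y_{k,j}\}\subset A_k$ whose $\ell^\infty$-balls of radius $L_k/8$ cover $A_k$; a volume count using $|A_k|\le CN^{n-1}L_k$ produces at most $CN^{n-1}/L_k^{n-1}$ centres per scale. Define
\begin{equation*}
\p(x)=2^\alpha\sum_{k,j}L_k^\alpha\,\chi\bigl((x-y_{k,j})/L_k\bigr).
\end{equation*}
The support lies in $V_N$ because each individual bump is supported in $B_\infty(y_{k,j},L_k/2)$ and $d_N(y_{k,j})\ge L_k$ forces that ball into $V_N$. For the pointwise lower bound, given $x\in V_{N-L}$ pick $k$ with $x\in A_k$ and then $j$ with $(x-y_{k,j})/L_k\in[-\tfrac18,\tfrac18]^n$; there $\chi=1$, giving $\p(x)\ge 2^\alpha L_k^\alpha\ge d_N(x)^\alpha$ upon using $d_N(x)<2L_k$ and $\alpha\ge 0$ for $n\le 4$.

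For the bilaplacian estimate, a scaling computation on a single rescaled bump gives
\[
\sum_{x\in\Z^n}\bigl|\Delta\bigl(L_k^\alpha\chi((\cdot-y_{k,j})/L_k)\bigr)\bigr|^2\le C\,L_k^{2\alpha+n-4}=C,
\]
the exponent vanishing precisely because $\alpha=(4-n)/2$. Since the bumps within a single annulus have uniformly bounded overlap (the centres form an $L_k/8$-net), the contribution of one scale is at most $CN^{n-1}/L_k^{n-1}$. Summing across scales by Minkowski's inequality and using the convergent geometric series $\sum_k 2^{-k(n-1)/2}<\infty$ for $n\ge 2$ yields $\sum_{x\in\Z^n}|\Delta\p(x)|^2\le C\,N^{n-1}/(L+1)^{n-1}$. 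The main delicate point is the single-bump scaling estimate: for $L_k$ large it is a routine Taylor/Riemann-sum comparison between the discrete and continuous Laplacians of $\chi$, while for $L_k$ of order one the bump consists of only a handful of lattice points and the bound follows from a direct computation of the discrete second differences.
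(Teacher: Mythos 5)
Your construction is correct, and it takes a genuinely different route from the paper's. The paper builds $\p=\sum_j\p_j$ from nested \emph{plateau} functions $\p_j(x)=2^{j(4-n)/2+1}\prod_i\eta(d_i(x)/2^j)$, each of which is constant equal to $2^{j(4-n)/2+1}$ on $V_{N-2^j}$ and whose Laplacian is supported only in a boundary layer of thickness $\sim 2^j$; the lower bound $\p\ge d_N^{(4-n)/2}$ follows because at a point of dyadic depth $2^k$ the single term $\p_k$ already suffices, and the $L^2$ bound is obtained from the pointwise estimate $|\Delta\p_j|\le C2^{-jn/2}$ combined with a count of the annuli $W_{N,k}$, with no need for Minkowski. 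Your construction instead tiles each dyadic annulus with $O(N^{n-1}/L_k^{n-1})$ localized bumps of amplitude $\sim L_k^{(4-n)/2}$ and radius $\sim L_k$, each contributing $O(1)$ to $\|\Delta\p\|_{L^2}^2$ after the scaling identity $2\alpha+n-4=0$, and then sums the scales via the triangle inequality in $L^2$. Both proofs ultimately rest on the same scaling exponent; what yours buys is a more local, modular argument that does not exploit the tensor-product structure of the box (the paper's $\prod_i\eta(d_i/2^j)$ does), so it would transfer more readily to less rectangular domains, at the cost of needing to control overlap and appeal to Minkowski across scales. Two points you should make explicit if you write this up: (i) choose the centers $y_{k,j}$ as a \emph{maximal} $L_k/8$-separated subset of $A_k$, so that you simultaneously get the covering property (needed for the lower bound) and the uniformly bounded within-scale overlap (needed to pass from $\sum_j\|\Delta g_{k,j}\|_{L^2}^2$ to $\|\sum_j\Delta g_{k,j}\|_{L^2}^2$); and (ii) the single-bump estimate $\sum_x|\Delta(L_k^\alpha\chi((\cdot-y)/L_k))|^2\le C$ does need the $C^2$ smoothness of $\chi$ to hold uniformly down to $L_k=1$, which you flag but do not carry out -- this is the discrete analogue of the paper's use of $\|\eta''\|_\infty$ in \eqref{e:laplace_estimate_phiL} and is equally routine, so the gap is cosmetic rather than substantive.
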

\begin{proof}
Recall  $W_{N,k}=\{x\in V_N\colon 2^k\le d_N(x)<2^{k+1}\}$ for $k=0,1,\ldots,\lfloor\log_2N\rfloor$. Let in addition $W_{N,-1}=\Z^n\setminus V_N$.

Fix a smooth function $\eta\colon\R\to\R$ such that $\eta\ge0$, $\eta=1$ on $[1,\infty)$ and $\eta=0$ on $(-\infty,0]$. For $i\in\{1,2,\ldots,n\}$ and $x\in\Z^n$ we introduce the distance  $d_i(x)=\dist(x,\Z^n\setminus(\Z^{i-1}\times\{-N,\ldots,N\}\times\Z^{n-i}))$ of $x$ to the boundary in direction $x_i$.

For $j=0,1,\ldots \lfloor\log_2N\rfloor-1$ consider the function
\begin{align}
	\p_j(x)=2^{\frac{j(4-n)}{2}+1} \prod_{i=1}^n\eta\left(\frac{d_i(x)}{2^j}\right).
\end{align}
Note that 
\begin{align}\label{e:phiL_lower_bound}
\p_j(x)= 2^{\frac{j(4-n)}{2}+1}
\end{align} for all $x\in V_N$ such that $d_N(x)\geq 2^j$.
Moreover 
\begin{align}\label{e:laplace_estimate_phiL}
|\Delta \p_j(x)|\le C2^{\frac{j(4-n)}{2}+1}\| \eta''\|_{\infty}\frac{1}{2^{2j}}\le\frac{ C \lVert \eta''\rVert_{\infty} }{ 2^{\frac{jn}{2}} }.
\end{align}
In fact $\Delta \p_j(x)=0$ if $d_N(x)>2^j$ because
$\p_k$
is constant on $V_{N-2^j}$.
We define the function
\begin{align}
\p=\sum_{j=\lfloor\log_2(L+1)\rfloor}^{\lfloor\log_2N\rfloor} \p_j.
\end{align}
For $x\in V_{N-L}$ let now $k$ be such that $x\in W_{N,k}$, and observe that $\lfloor\log_2(L+1)\rfloor\le k\le\lfloor\log_2N\rfloor$.
The estimate \eqref{e:phiL_lower_bound} implies
\begin{align}
\p(x)\ge \p_k(x)\ge 2^{\frac{k(4-n)}{2}+1}\ge\left(2\cdot2^k\right)^{\frac{4-n}{2}}\ge d_N(x)^{\frac{4-n}{2}}.  
\end{align}
For an arbitrary $x\in \Z^n$ let again $k\in\{-1,0,1,\ldots\}$ be such that $x\in W_{N,k}$. Then \eqref{e:laplace_estimate_phiL} implies that
\begin{align}
|\Delta \p(x)|
\le \sum_{j=k\vee \lfloor\log_2(L+1)\rfloor}^{\lfloor\log_2N\rfloor} |\Delta \p_j|
\le \sum_{j=k\vee \lfloor\log_2(L+1)\rfloor}^\infty \frac{ C \|\eta''\|_{\infty} }{ 2^{\frac{jn}{2}} }
\le \frac{C'}{2^{\frac{(k\vee \lfloor\log_2(L+1)\rfloor)n}{2}}}.
\end{align}

Using that $|W_{N,k}|\leq C2^kN^{n-1}$ for $k\ge0$ and that on $W_{N,-1}$ $\Delta \p(x)$ is zero except possibly on the set $V_{N+1}\setminus V_N$ of cardinality $CN^{n-1}\le C'2^{-1}N^{n-1}$, the previous estimate implies that

\begin{align}\begin{split} 
\sum_{x\in \Z^n} |\Delta \p(x)|^2
&\le \sum_{k=-1}^{\lfloor \log_2N\rfloor}\sum_{x\in W_{N,k}}|\Delta \p(x)|^2\\
&\le \sum_{k=-1}^{\infty} \frac{C2^kN^{n-1}}{2^{(k\vee \lfloor\log_2(L+1)\rfloor)n}}\\
&\le \sum_{k=-1}^{\lfloor\log_2(L+1)\rfloor} \frac{C2^kN^{n-1}}{2^{\lfloor\log_2(L+1)\rfloor n}}+ \sum_{k=\lfloor\log_2(L+1)\rfloor+1}^\infty\frac{C2^kN^{n-1}}{2^{kn}}\\
&\le C\frac{N^{n-1}}{(L+1)^{n-1}}+C\frac{N^{n-1}}{(L+1)^{n-1}}=C'\frac{N^{n-1}}{(L+1)^{n-1}}.
\end{split}\end{align}
\end{proof}

\section{Upper bounds}\label{s:upperbound}
In order to prove the upper bound in Theorem \ref{t:theoremVL}, we will find a suitably sparse set $E_{N,L}$ of points at the boundary such that the $\{\psi_x\colon x\in E_{N,L}\}$ are almost independent in the sense that their covariance matrix is diagonally dominant. We can then use Lemma \ref{l:LiShao} to compare them to actually independent random variables. The following argument is taken from \cite[Section 6.2.1]{Schweiger2016}

\begin{proof}[Proof of Theorem \ref{t:theoremVL}, upper bound]
Note that for $L>\frac{N}{2}$ the upper bound is trivial so we can assume $L\leq \frac{N}{2}$.
Let $\alpha>0$ be a constant to be chosen later, and let $E_{N,L}=V_{N-L}\cap ((\lceil\alpha L\rceil\Z)^{n-1}\times\{ N-L\})$. This is a set of points on one face of $[-N+L,N-L]^n$ such that any two points have distance at least $\alpha L$. Its cardinality satisfies
\begin{align}\label{e:size_independent_subset}
|E_{N,L}|=\left(2\left\lfloor \frac{N-L}{\lceil(\alpha (L+1)\rceil}\right\rfloor+1\right)^{n-1}\geq c\frac{N^{n-1}}{\alpha^{n-1}(L+1)^{n-1}}
\end{align}
Clearly $d_N(x)=d_N(y)=L+1$  for any $x, y \in E_{N,L}$  and therefore according to \eqref{e:estGN4} for $x\neq y$ 
\begin{align}
|G_N(x,y)|\leq C\frac{(L+1)^4}{(|x-y|_\infty+1)^n}\le  C\frac{(L+1)^4}{|x-y|_\infty^n}
\end{align}
If we combine this with \eqref{e:estGN1} we obtain for any $x\in E_{N,L}$
\begin{align}
\begin{split}
	\sum_{\substack{y\in E_{N,L}\\y\neq x}}&\frac{|G_N(x,y)|}{\sqrt{G_N(x,x)G_N(y,y)}}\\
	&\le C\sum_{\substack{y\in E_{N,L}\\y\neq x}}\frac{(L+1)^4}{(L+1)^{4-n}|x-y|_\infty^n}\\
	&= C\sum_{j=1}^\infty|\{y\in E_{N,L}\colon |y-x|_\infty=j\lceil\alpha (L+1)\rceil\}|\frac{(L+1)^n}{(j\lceil\alpha (L+1)\rceil)^n}\\
	&\le \frac{C}{\alpha^n}\sum_{j=1}^\infty \frac{a_j}{j^n}
	\end{split}
\end{align}
where $a_j= \begin{cases}2&\text{for $n=2$}\\4j+4&\text{for $n=3$}\end{cases}$. Thus $\sum_{j=1}^\infty \frac{a_j}{j^n}<\infty$ and hence
\begin{equation}\label{e:sumcorrel}
	\sum_{\substack{y\in E_{N,L}\\y\neq x}}\frac{|G_N(x,y)|}{\sqrt{G_N(x,x)G_N(y,y)}}\le \frac{C}{\alpha^n}.
\end{equation}
We now choose $\alpha$ large enough that the right hand side of \eqref{e:sumcorrel} becomes less than $\frac14$.

We 
define the Gaussian random vector $(X_x)_{x\in E_{N,L}}$  by $X_x=\frac{\psi_x}{\sqrt{G_N(x,x)}}$. Let $\Sigma_X$ be its covariance matrix. Then $(\Sigma_X)_{x,x}=1$ for all $x$ and \eqref{e:sumcorrel} implies that
\begin{equation}\label{e:sumcorrelX}
	\sum_{\substack{y\in E_{N,L}\\y\neq x}}|(\Sigma_X)_{x,y}|\le\frac14.
\end{equation}
Let $\{Y_x\}_{x\in E_{N,L}}$ be i.i.d. normal variables distributed according to $\mathcal{N}\left(0,\frac32\right)$ and let $\Sigma_Y=\frac32 \mathbbm{1}_{|E_{N,L}|}$ be their joint covariance matrix.

Because of equation \ref{e:sumcorrelX} the matrix $\Sigma_Y-\Sigma_X$ then satisfies
\begin{align}(\Sigma_Y-\Sigma_X)_{x,x}=\frac32-1=\frac12>\sum_{\substack{y\in E_{N,L}\\y\neq x}}(\Sigma_X)_{x,y}\end{align}
This means that $\Sigma_Y-\Sigma_X$ is strictly diagonally dominant and hence positive definite. Hence we can apply Lemma \ref{l:LiShao} and obtain
\begin{align}
\begin{split}
	\left(\frac{1}{2}\right)^{|E_{N,L}|}&=\PP(Y\in (0,\infty)^{|E_{N,L}|})\\
	&\geq\left(\frac{\det\Sigma_X}{\det\Sigma_Y}\right)^{\frac12}\PP(X\in (0,\infty)^{|E_{N,L}|})\\
	&=\left(\frac{\det\Sigma_X}{\det\Sigma_Y}\right)^{\frac12}\PP_N(\psi_x\geq 0\ \forall x\in E_{N,L})\\
	&\ge\left(\frac{\det\Sigma_X}{\det\Sigma_Y}\right)^{\frac12}\PP_N(\Omega_{V_N,+}).
	\end{split}
\end{align}

It remains to estimate $\frac{\det\Sigma_X}{\det\Sigma_Y}$. Since $\Sigma_Y$ is diagonal, $\det\Sigma_Y=\left(\frac{3}{2}\right)^{|E_{N,L}|}$.

On the other hand, by \eqref{e:sumcorrelX} the matrix $\Sigma_X-\frac34\mathbbm{1}_{|E_{N,L}|}$ is still diagonally dominant and hence positive semidefinite. Hence all eigenvalues of $\Sigma_X$ must be at least $\frac34$. Therefore $\det\Sigma_X\geq\left(\frac{3}{4}\right)^{|E_{N,L}|}$.

We conclude
\begin{align}
\begin{split}
\PP_N(\Omega_{V_N,+})\leq\left(\frac{1}{2}\right)^{|E_{N,L}|}\left(\frac{\det\Sigma_Y}{\det\Sigma_X}\right)^{\frac12}
&\leq\left(\frac{1}{2}\right)^{|E_{N,L}|}\left(\frac{3/2}{3/4}\right)^\frac{|E_{N,L}|}2
\\
&
=\left(\frac{1}{\sqrt{2}}\right)^{|E_{N,L}|}.
\end{split}\end{align}
If we recall that by \eqref{e:size_independent_subset} $|E_{N,L}|\geq c\frac{N^{n-1}}{\alpha^{n-1}(L+1)^{n-1}}$, we finally obtain
\begin{align}\PP_N(\Omega_{V_{N-L},+})\leq\exp\left(-c\frac{N^{n-1}}{(L+1)^{n-1}}\right)\end{align}
for $c=\frac1{2\alpha^{n-1}}\log2$.
\end{proof}

\bigskip
\noindent
{\bf Acknowledgements}\\
The authors would like to thank Stefan Müller for several helpful discussions. Jean-Dominique Deuschel would like to thank Amir Dembo and Jason Miller for valuable comments.\\
Simon Buchholz and Florian Schweiger were partially supported by the German Research Foundation through the Collaborative
Research Centre 1060 {\it The Mathematics of Emergent Effects}.
Simon Buchholz was suppported by the {\it Bonn International Graduate School in Mathematics (BIGS)}. Florian Schweiger was supported by the {\it Studienstiftung des deutschen Volkes}.

\bibliographystyle{alpha}
\bibliography{EntropicRepulsion}
\end{document}